\newtheorem{theorem}{Theorem}[section]
\newtheorem{lemma}[theorem]{Lemma}
\newtheorem{cor}[theorem]{Corollary}
\newtheorem{prop}[theorem]{Proposition}
\newtheorem{conjecture}[theorem]{Conjecture}
\newtheorem{definition}[theorem]{Definition}
\title{A degree preserving delta wye transformation with applications to 6-regular graphs and Feynman periods}
\author{Shannon Jeffries and Karen Yeats\footnote{SJ was supported by an NSERC USRA.  KY is supported by an NSERC Discovery grant and by the Canada Research Chairs program.  Both authors thank Michael Borinsky and Oliver Schnetz for useful discussions and for providing the results of their computations.}}
\begin{document}
\maketitle

\begin{abstract}
    We investigate a degree preserving variant of the $\Delta$-Y transformation which replaces a triangle with a new 6-valent vertex which has double edges to the vertices that had been in the triangle.  This operation is relevant for understanding scalar Feynman integrals in 6 dimensions.  We study the structure of equivalence classes under this operation and its inverse, with particular attention to when the equivalence classes are finite, when they contain simple 6-regular graphs, and when they contain doubled 3-regular graphs.  The last of these, in particular, is relevant for the Feynman integral calculations and we make some observations linking the structure of these classes to the Feynman periods.  Furthermore, we investigate properties of minimal graphs in these equivalence classes.
\end{abstract}

\section{Introduction}

In graph theory the delta-wye or $\Delta$-Y transformation takes a triangle in a graph and replaces it with a new three-valent vertex incident to the three vertices of the triangle, the Y-$\Delta$ operation takes a three-valent vertex and reverses this process.  

The $\Delta$-Y and Y-$\Delta$ transformations have been used in converting electric circuits since at least 1899 \cite{Kcircuit} and are now a standard technique taught in circuit analysis textbooks with a particularly important application in three-phase power systems (see for example section 11.5 of \cite{JHJC}). The $\Delta$-Y and Y-$\Delta$ transformations are also very interesting and widely studied as pure graph theory.  The main question on these operations studied as pure graph theory is $\Delta$-Y reducibility, that is when can a graph be reduced to a single vertex or another small fixed graph of interest using $\Delta$-Y and Y-$\Delta$ transformations, series parallel reductions, and removal of loops and degree one vertices \cite{EdeltaY, Gthesis, TdeltaY, Wagner, Yu0, Yu}, including algorithmic concerns \cite{DYintro, GSA, Tr}, and variants with marked vertices known as roots or terminals \cite{Dthesis, ACGP}.
Generalizing the application in electric circuits, the $\Delta$-Y transformations are also important in statistical mechanics (see \cite{Opotts, Fpotts, PBstartriangle, AUYANG198957, Bstartriangle} and the references therein).  In this context they are usually known as \emph{star triangle relations}.  In quantum field theory these transformations are also important, either again known as star triangle relations, see for instance \cite{CDI, GSstartriangle, Brbig, Cmsc, borinsky2021graphical}, or in the form of the \emph{method of uniqueness} originating from \cite{Parisi:1972zm}, see also \cite{Kazakov:1983dyk, Gracey1992OnTE} for some important subsequent development and uses.

We come to the problem from the direction of the last three quantum field theory references.  There the focus is on understanding the mathematical structure of Feynman integrals emphasizing a particularly simple class of quantum field theories known as scalar field theories, and emphasizing methods for calculating and understanding the Feynman period, an integral that can be viewed as an important residue of the Feynman integral.  The techniques are predominantly algebraic and graph theoretic.  In particular key graph polynomials transform well under the $\Delta$-Y and Y-$\Delta$ transformations.  However, the graphs involved are predominantly regular graphs or almost regular graphs and so the $\Delta$-Y is not ideally suited to the situation.  

In \cite{borinsky2021graphical, BSphi3}, Borinsky and Schnetz study Feynman periods and Feynman integrals of $\phi^3$ graphs.  These are 3-regular graphs except possibly for at most three degree 2 vertices.  Their main technique, known as the method of graphical functions \cite{Sgraphfn}, uses graph transformations with up to three marked vertices to simplify the graph, building the result of the integral as the graph reduces.  When applied to $\phi^3$ graphs specifically it is useful to double all the edges of the graph to obtain a 6-regular graph and then perform graph transformations which are well behaved with respect to the graphical functions and which preserve 6-regularity on this 6-regular graph.  One of these operations is a 6-regular analogue of the $\Delta$-Y and Y-$\Delta$ transformations.  It is this operation that we will study in the present paper.

Specifically, we are interested in the operation which converts a triangle into a new 6-valent vertex connected by a double edge to each vertex of the original triangle, and in the reverse operation.  Borinsky and Schnetz show that this operation leaves the Feynman period unchanged, so this is an operation that is significant in quantum field theory, but we will study it primarily as a purely graph theoretic operation.  The reader does not need to know any quantum field theory to understand the paper, though the specific questions we are interested in remain motivated by the field theory and we will make some comments on this application and observed patterns in the Feynman periods.

\subsection{Roadmap}

The paper will proceed as follows.  In Section~\ref{subsec graph theory} we give the graph theory background including precise definitions of the original $\Delta$-Y and our new degree preserving version.  Section~\ref{sec: Feynman Periods} gives a brief overview of the parts of quantum field theory which lead to this operation.  This section is intended for graph theorists, but the reader who is not concerned with the underlying motivation or already knows some quantum field theory can skip it.  In Section~\ref{sec props} we will discuss how the degree preserving version of $\Delta$-Y interacts with standard graph theory notions such as planarity and cyclomatic number.  Some of these behave as for the usual $\Delta$-Y and some do not; we restrict our attention to those with some relevance to Feynman periods.  In Section~\ref{sec equiv classes} we move to our main question of interest, the nature of the equivalence classes of 6-regular graphs under the degree preserving version of the $\Delta$-Y operation.  We focus on when the equivalence classes are finite and when they are infinite in Sections~\ref{sec add ons} and \ref{sec finite infinite} culminating in Theorems~\ref{thm infinite} and \ref{thm finite} giving a characterization of when classes are finite and when they are infinite.  Section~\ref{sec doubled 3 reg} proceeds to consider the equivalence classes in the case of primary physical interest, doubled 3-regular graphs, and makes some observations on how the equivalence classes appear to relate to the Feynman periods calculated by Borinsky and Schnetz \cite{BSphi3}.  The main theorems of the previous section are particularly nice in this context as shown in Corollary~\ref{cor finite}.  The computational observations on the relation to Feynman periods suggest that the minimal graphs in these equivalence classes are particularly interesting.  Section~\ref{sec simple} considers the equivalence classes of simple graphs.  These are special both in that their equivalence classes are always finite, see Corollary~\ref{cor simple finite}, and in that simple graphs are minimal, see Theorem~\ref{thm minimal}.  Finally the paper concludes with a discussion of the code used to explore some of these ideas with a particular focus on some conjectures regarding computing minimal graphs, along with a discussion of open questions, and a return to the initial motivation, in Section~\ref{sec discussion}.

\section{Background and set up}

\subsection{Graph Theory}\label{subsec graph theory}

Although originally motivated by work with Feynman graphs in quantum field theory, as will be described in more detail in Section~\ref{sec: Feynman Periods}, this paper consists mostly of purely graph theoretical results and observations. As a result, it is essential that we define the vocabulary that we intend to use throughout this paper.
For us, a \textit{multigraph} may have multiedges but not loops, and a graph that does not have multiedges or loops will be called a \textit{simple graph}. The term \textit{graph} will refer to both multigraphs and simple graphs.  Further graph theory definitions will come from \cite{Dbook}, unless otherwise specified.

The cyclomatic number of a graph is defined as the dimension of the cycle space of the graph.  In physics this is known as the loop number.  It is also the first Betti number of the graph as a topological space.

The girth of a graph with at least one cycle is the minimum length of a cycle in the graph.

The Menger-Whitney Theorem will be useful to us.  It says that a simple graph $G$ is $k$-connected if and only if for every pair of vertices $a$ and $b$ in $G$, there exists at least $k$ independent $a-b$ paths in $G$.

Not directly relevant for our work, but relevant to much of the standard $\Delta$-Y literature is the notion of forbidden minors and minor closed classes of graphs.  One graph is a minor of another if the graph is the result of a (possibly empty) sequence of edge deletions and edge contractions of the other graph.  A class of graphs is minor closed if a graph being in the class implies all its minors are also in the class.  Planar graphs are famously a minor closed class of graphs.  As another example, linklessly embeddable graphs are those which can be embedded into 3-space without any cycles being linked.  This is also a minor closed class \cite{RSTlinkless}.  A famous result of Robertson and Seymour, the culmination of their graph minors project \cite{RSXX}, is that every minor closed graph class is defined by a finite set of forbidden minors.

\medskip

 The degree preserving $\Delta$-Y operation or $\Delta$-YY operation described in this paper is based on the delta-wye or $\Delta$-Y transform. As shown at the top of Figure \ref{fig:DeltaY-Transformations}, a \textit{$\Delta$-Y transformation} takes the delta shape with vertices $\{u, v,w\}$ and edges $\{uv, uw, vw\}$ and transforms it by adding a vertex $x$, removing the edge set $\{uv, uw, vw\}$, and replacing it with a wye shape with edge set $\{ux ,vx, wx\}$.  See \cite{DYintro} for further details.  We call the opposite transformation, starting with a wye and moving to a delta, a \textit{Y-$\Delta$ transformation}. This is shown at the bottom of Figure \ref{fig:DeltaY-Transformations}. Note that a wye is made up of a vertex with degree 3, its 3 neighbours, and the three edges connecting it to its neighbours. 

\begin{figure}[h]
    \centering
    \includegraphics{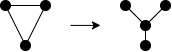}\\
    \includegraphics{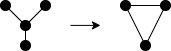}
    \caption{Top: A $\Delta$-Y transformation. Bottom: A Y-$\Delta$ transformation.}
    \label{fig:DeltaY-Transformations}
\end{figure}

From a graph theoretical perspective, the primary interest in Y-$\Delta$ and $\Delta$-Y transformations is the reducibility of graphs. For reducibility questions, in addition to the $\Delta$-Y and Y-$\Delta$ transformations, four additional operations are added that allow us to reduce graphs. We can delete a loop, delete a degree-one vertex and its edge, delete a degree-two vertex and its edges then add an edge connecting its neighbours, or replace a pair of parallel edges with a single edge. If a graph can be reduced to a single vertex using the $\Delta$-Y transformation, Y-$\Delta$ transformation, and these four additional operations, we say that it is \textit{$\Delta$-Y reducible}. At the core of this question of reducibility, we have an idea of equivalence classes of graphs under the $\Delta$-Y operation. Two graphs $G$ and $H$ are \textit{$\Delta$-Y equivalent} if there exists a series of $\Delta$-Y and Y-$\Delta$ transformations that transform $G$ into $H$. An equivalence class is a maximal set of graphs that are all equivalent to each other.  The Petersen family, the 7 graphs shown in Figure \ref{fig:Petersen} which form the $\Delta$-Y equivalence class of the Petersen graph, is a particularly famous example as they are the forbidden minors for linkless embedding \cite{RSTlinkless}.  
\begin{figure}
    \centering
    \includegraphics[width=12cm]{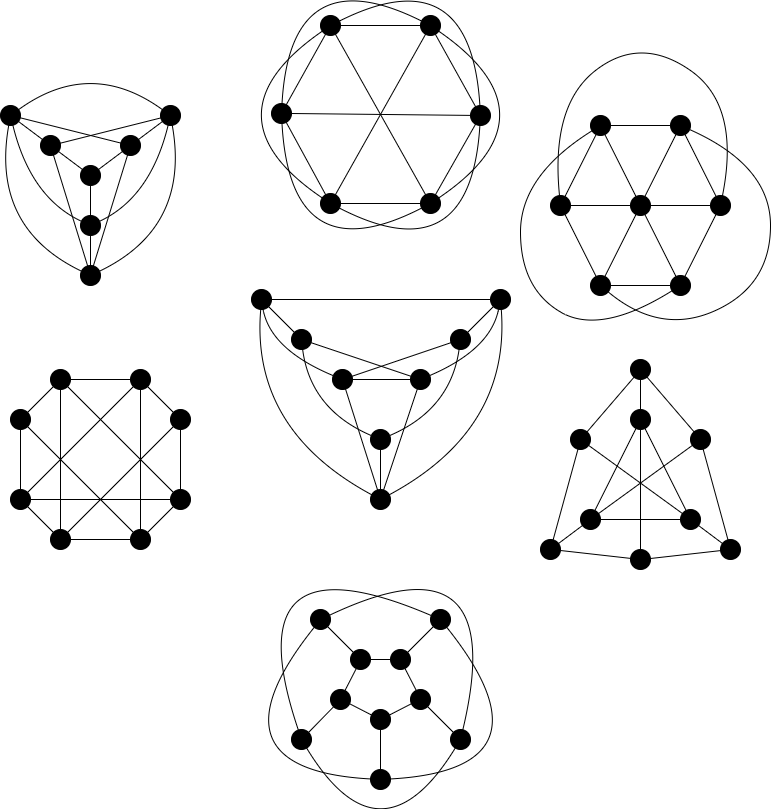}
    \caption{The Petersen family}
    \label{fig:Petersen}
\end{figure}
$\Delta$-Y equivalence classes allowed Yaming Yu to characterize the set of forbidden minors of $\Delta$-Y reducible graphs, and a similar concept is at the core of \cite{Yu}.

In this paper, we will discuss a variant of this $\Delta$-Y operation that preserves vertex degree. We will look at how it differs from the $\Delta$-Y operation and investigate its equivalence classes.

Let us now define the operations that we are interested in, which we will call the YY-$\Delta$ and $\Delta$-YY transformations. 
\begin{definition}
The \emph{YY-$\Delta$ transformation} is defined as follows.  Suppose we have a set of four vertices $\{u, v, w, x\}$, with the multiset of edges $\{ux, ux, vx, vx, wx, wx\}$ and where $x$ is 6-valent, that is there are no further edges incident to $x$.  Transform these vertices and edges by removing all of these edges and the vertex $x$ and adding the edges $\{uv, uw, vw\}$, creating a 3-cycle between $u$, $v$, and $w$. 

The \emph{$\Delta$-YY transformation} is the reverse operation, defined as follows.  Suppose we have vertices $\{u, v, w\}$ and edges $\{uv, uw, vw\}$.  Transform by removing all of these edges, adding a new vertex $x$ and new edges $\{ux, ux, vx, vx, wx, wx\}$. 

We use the term \emph{$\Delta$-YY operation} to refer to both transformations. These can be more concisely described as replacing a wye with a delta or a delta with a wye respectively, where a delta is depicted in Figure \ref{fig:delta} and a wye is depicted in Figure \ref{fig:doublewye}
\end{definition} 
Note that if we begin with a 6 regular graph and perform either $\Delta$-YY operation then the result is also a 6 regular graph.

These transformations get their names from the shapes these collections of vertices and edges resemble. A $\Delta$-YY transformation turns a delta shape into a Y shape of double edges, hence the double Y. A YY-$\Delta$ transformation turns a Y shape with all double edges into a delta shape.

\begin{figure}[h]
    \centering
    \begin{minipage}{.5\textwidth}
        \centering
        \includegraphics[width=2cm]{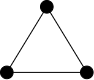}
        \caption{$\Delta$ Shape, also known as a \textit{delta}}
        \label{fig:delta}
    \end{minipage}%
    \begin{minipage}{0.5\textwidth}
        \centering
        \includegraphics[width=2cm]{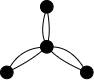}
        \caption{YY Shape, also known as a \textit{wye}}
        \label{fig:doublewye}
    \end{minipage}
\end{figure}

Properties of a graph which do not depend on the multiplicity of edges, such as Hamiltonicity, clearly behave in the same way under the $\Delta$-YY operations as under the classical $\Delta$-Y operations, while other properties behave differently, for instance trivially having an Eulerian tour is preserved under $\Delta$-YY operations but not under $\Delta$-Y operations.  We will discuss a few properties of interest in Section~\ref{sec props}.

The interplay between a multigraph and its underlying simple graph as well as the deltas and wyes themselves will be important for our study of $\Delta$-YY operations.  To this end we make the following definitions.
\begin{itemize}
    \item A \emph{connection} is a non-empty set of all the edges between two vertices, and each edge in a connection will be simply called an edge. Note that a connection with only one edge is allowed.
    \item The simplified version of a graph, $G^S$ is defined as follows. Given a multigraph $G$, we get $G^S$ by turning every connection of $G$ into a connection of size 1, giving us a simple graph.
    \item We say that two vertices are \emph{doubly adjacent} when the connection between them has size exactly 2.
    \item A \emph{double-edged graph} is a graph where every connection has size exactly 2. 
    \item A \emph{wye} (Figure \ref{fig:doublewye}) is the subgraph made up of vertex set $\{u, v, w, x\}$ and edge set $\{ux, ux, vx, vx, wx, wx\}$ that gets transformed in the YY-delta transformation. A \emph{simple wye} is a wye with edge set $\{ux, vx, wx\}$ instead of $\{ux, ux, vx, vx, wx, wx\}$ and will only be used when referring to the original $\Delta$-Y operation. In both the wye and the simple wye, $x$ cannot have any other edges incident to it.
    \item When referring to a wye made up of a vertex $x$ and its three neighbours, we say that $x$ is the \emph{centre} of the wye and its neighbours are the \emph{external vertices} of the wye.
\end{itemize}
  
\subsection{Feynman Periods in quantum field theory}
\label{sec: Feynman Periods}

Quantum field theory is the framework by which we can understand arbitrary numbers of interacting particles quantum mechanically.  It unifies quantum mechanics and special relativity.  When applied to the fundamental particles studied in high energy physics, quantum field theory has been enormously successful, giving some of the most precise correlations between prediction and experiment anywhere in science.  

Perturbative quantum field theory is the approach to quantum field theory where quantities of physical significance are computed by expanding in some small parameter.  The most important such expansion for us, which is also the most important historically, is the loop expansion by Feynman diagrams.  In this expansion, physical quantities are computed by an expansion which is indexed by certain graphs organized by increasing \emph{loop number} and where each graph contributes a \emph{Feynman integral} to the expansion.  The expansion parameter here is the \emph{coupling constant} or a power thereof.  Taking the special case where spacetime has 0 dimensions, this approach gives us a rigorous and powerful tool for graph counting, called 0-dimensional field theory.  See \cite{LZgraphs} for a graph theoretic exposition.  In other dimensions of spacetime, such as 4 or 6, the standard derivation of the loop expansion in Feynman diagram (using the path integral) is not mathematically rigorous, but as mathematicians we can take the Feynman diagrams, the Feynman integrals, and the loop expansion as the fundamental definitions and explore their mathematical properties.

Feynman graphs are graphs where the edges roughly represent particles and the vertices particle interactions.  Different quantum field theories give different types of edges (directed or undirected with different additional data carried along) and different types of vertices (allowable degrees and allowable combinations of edge types).  External edges represent particles entering or exiting the system and can be thought of as unpaired half edges or edges which go off to infinity since they are incident to only one vertex.  

For us the \emph{size} of a Feynman graph will be its loop number.  For practical quantum field theory computations a more meaningful notion of size relates to some useful measure of the magnitude of the Feynman integral; in particular for a given experiment only a certain domain of external momenta may be of interest and in this restricted domain some Feynman integrals may dominate while other give a negligible contribution even when they are of the same loop order.  However, for a mathematical consideration of the underlying algebraic structures we do not want to restrict to any particular configuration of external momenta and the most useful notion of size is simply the loop number.  Since we will be working with connected regular graphs, by Euler's formula the loop number, the number of vertices, and the number of edges are related.

Feynman integrals can be built out of the Feynman graph -- each edge and vertex makes a contribution to the integrand.  They are quite intricate and depend on many physical parameters such as masses of the particles and momenta of the particles coming into and going out of the system.  A simplified integral, known as the \emph{Feynman period} \cite{bkphi4, Brbig, Sphi4}, maintains much of the mathematical richness of the full Feynman integral while also still being of physical relevance as it is a particular residue of the full Feynman integral and gives the contribution of the Feynman graph to important things in quantum field theory such as the renormalization constants.

The Feynman period can be defined as follows.  Given a graph $G$, associate a variable $a_e$ to each edge $e$ and let the Kirchhoff polynomial of $G$ be 
\[
    \Psi_G = \sum_{T}\prod_{e\not\in T}a_e,
\]
where the sum is over all spanning trees $T$ of $G$.  Then the Feynman period is the following integral when it converges
\[
    \int_{a_e\geq 0}  \frac{da_2\cdots da_{|E(G)|}}{\Psi_G^{D/2}|_{a_1=1}},
\]
where $D$ is the dimension of spacetime.
This is a nice integral from a graph theoretic perspective because it is controlled by $\Psi_G$ which is a combinatorially defined polynomial.  The two main techniques for calculating Feynman periods also both have algebraic and graph theoretic flavours.  Denominator reduction and its generalizations, notably HyperInt\cite{PANZER2015148} and HyperlogProcedures\cite{Shyper}, integrate one edge at a time giving polynomials in terms of spanning forests in the denominator at each step and numerators in terms of multiple polylogarithms \cite{Brbig, Bmpl, Phyp, Pphd}.  Graphical functions \cite{Sgraphfn} build the integral one edge at time by working with a graph with three marked vertices, one of which is marked with $z$, the variable of the function.  The process is explicitly graph-based at every step.  Note that neither approach can compute all Feynman periods -- that would be much too much to ask -- but both of these techniques can also calculate some Feynman integrals, not just the periods.

Physically, the Feynman period is the contribution of the graph to the beta function of the theory.  From this or directly from the definitions, one can see that the Feynman period does not depend on the external momenta of the graph.  Alternately, breaking an internal edge we can see the periods as single scale integrals.  Despite this lack of, or very limited, dependence on the external parameters, Feynman periods are still useful in many quantum field theory calculations, see for example the introduction to \cite{BSphi3} and references therein.  They are also mathematically very interesting since they distill out key number theoretic content from the Feynman integral.

In view of the lack of dependence on external momenta, we can remove the external edges from the Feynman graph obtaining a graph in the sense of this paper.  These graph may have double edges or potentially higher multiple edges.  In principle there may be self-loops, but physically depending on the theory these either do not appear or their contribution factors out and in any case they do not contribute to the period, so we will lose nothing of interest by not allowing them.

Much of the work using both these integration approaches was done with graphs in $\phi^4$ theory in 4 dimensions.  In $\phi^4$ theory the Feynman graphs are 4-regular when counting the external edges, and so with the external edge removed they are 4-regular except for a small number of lower degree vertices. Furthermore an operation known as \emph{completion} lets us move to honestly 4-regular graphs.  Many special techniques were developed and graphical symmetries proved for this 4-regular case \cite{Sphi4}.

Recently, Borinsky and Schnetz used the graphical function technique to study $\phi^3$ theory in 6 dimensions \cite{borinsky2021graphical, BSphi3}.  This opened up a new world of graph symmetries that are period invariants in this 3-regular, 6-dimensional case.  Again, with completion we can move from almost 3-regular to actual 3-regular graphs.  To best describe the graph symmetries, Borinsky and Schnetz found it convenient to take the 3-regular Feynman graph and double all the edges resulting in a 6-regular graph.  In the context of their Feynman integral computations, edges have weights, and this process of doubling the edges corresponds to viewing each edge of weight 1 as two edges of weight $1/2$ and then considering the problem in terms of weight $1/2$ edges.  On this 6-regular graph, they discovered that the $\Delta$-YY operations were period invariants.  Consequently 6-regular graphs in the same $\Delta$-YY equivalence classes have the same Feynman period and hence understanding these classes better is of physical interest.  Particularly interesting is understanding the classes that contain a doubled 3-regular graph.  This is the motivation for the present work.

As well as the $\Delta$-YY, planar duals and small vertex and edge cuts give period identities in both the $\phi^4$ and $\phi^3$ cases, so it is valuable to understand how planarity and connectivity relate to the $\Delta$-YY operations.

\medskip

Finally, we should say something about the kinds of numbers which come up in Feynman periods.  
If $s_1, s_2, \ldots s_k$ are positive integers with $s_1 > 1$ then the multiple zeta value $\zeta(s_1, s_2, \ldots, s_k)$ is defined to be
\[
\zeta(s_1, s_2, \ldots, s_k) = \sum_{n_1>n_2>\cdots n_k>0}\frac{1}{n_1^{s_1}n_2^{s_2}\cdots n_k^{s_k}}.
\]
Multiple zeta values generalize special values of the Riemann zeta function and are studied for their algebraic and number theoretic properties.  A lot of their study is in fact combinatorics of words. For graphs of low loop order, Feynman periods are expressible as rational linear combinations of products of multiple zeta values.  This does not remain true for all graphs as the loop order increases, elliptic polylogarithms appear and ultimately even more exotic things.  By Mnev universality, in some sense everything ultimately appears \cite{BrBe} at least when we consider all graphs.  For Feynman graphs in renormalizable field theories, the periods go beyond multiple zeta values \cite{BrS, Doreg}, but they do not appear to be universal, for instance elliptic curves do not seem to appear though other modular forms do \cite{Sgeo}. All the periods we will consider explicitly in Section~\ref{sec doubled 3 reg} are expressible in terms of multiple zeta values.  The only other thing we will need about multiple zeta values is that the weight of a multiple zeta value is the sum $s_1+s_2+\cdots s_k$ and the weight of a product of multiple zeta values is the sum of the weights of the factors.  This notion of weight should be taken to be defined on the list $(s_1, s_2, \ldots, s_k)$, though up to standard conjectures it relates to the transcendental weight of the multiple zeta values as numbers.

The ideas of this section are outlined with some more details and references in \cite{Ybook}.

\section{$\Delta$-YY operations and graph properties}\label{sec props}

We will proceed by looking at how some common graph properties are preserved and which are changed after a $\Delta$-YY or YY-$\Delta$ transformation. 

First some useful elementary observations, 
the $\Delta$-YY operation preserves vertex degree unlike the $\Delta$-Y operation. The operation does not preserve vertex number as a $\Delta$-YY transformation adds one vertex and a YY-$\Delta$ transformation deletes one vertex, just as with the original $\Delta$-Y operation. The $\Delta$-YY operation does not preserve edge number as a $\Delta$-YY transformation deletes 3 edges and adds 6 for a net difference of an additional 3 edges and a YY-$\Delta$ transformation results in a graph with 3 fewer edges. This differs from the $\Delta$-Y operation which preserves edge number. 

A $\Delta$-YY transformation makes no difference to the length of a path or a cycle of $G^S$ unless an edge of the delta is in the path or cycle, in which case the length of said path or cycle increases by one. Similarly, a YY-$\Delta$ transformation has no effect on the length of a path or cycle of $G^S$, unless the centre of the Y is a vertex in the path or cycle, in which case the length of said path or cycle decreases by one. Theses simple facts become incredibly useful when proving results about other properties of the $\Delta$-YY operation.  

\subsection{Planarity}
Planarity behaves under $\Delta$-YY operations as it does under $\Delta$-Y operations, however planarity is important for Feynman periods so we will take the time to lay out this behaviour here.

\begin{prop}

\begin{enumerate}
    \item Given a planar graph $G$, planarity is preserved in a $\Delta$-YY transformation if the delta is a face in any planar embedding of $G$
    \item Given a planar graph $G$, planarity is preserved when a YY-$\Delta$ transformation is performed on $G$
    \item Given a non-planar graph, non-planarity is preserved when a $\Delta$-YY transformation is performed
\end{enumerate}
\end{prop}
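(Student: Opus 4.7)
The plan is to handle the three parts in order, with Part 2 doing the real work and Parts 1 and 3 following easily.

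For Part 1, I would start from a planar embedding of $G$ in which the triangle $uvw$ bounds a face. I would then place the new vertex $x$ in the interior of that triangular face and, for each of $u, v, w$, draw two parallel curves from $x$ to that vertex, routed entirely inside the face so that they do not cross any existing edges and do not cross each other. The only thing to check is that the sector of the face between two such double edges is nonempty, which is clear since the face is a disk with three vertices on its boundary. This yields a planar embedding of the graph after the $\Delta$-YY transformation.

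For Part 2, the cleanest approach is to reduce the YY-$\Delta$ to the classical Y-$\Delta$. Starting from a planar embedding of $G$, I would first delete one edge from each of the three double edges incident to $x$. Edge deletion trivially preserves planarity, and the resulting subgraph contains a simple wye at $x$ with external vertices $u, v, w$. I would then invoke the known fact that the classical Y-$\Delta$ transformation preserves planarity (see for instance \cite{DYintro}) to replace this simple wye with the triangle $\{uv, uw, vw\}$ inside a planar embedding. The resulting graph is precisely the YY-$\Delta$ transform of $G$, since in both constructions we remove $x$ along with all edges incident to it and add the triangle on $\{u,v,w\}$. The mildly delicate point is checking that the two removed edges in each double-edge pair are not needed to witness planarity in the final embedding, which is automatic because the final graph no longer contains them.

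Part 3 is the contrapositive of Part 2. If $G$ is non-planar and its $\Delta$-YY transform $G'$ were planar, then applying YY-$\Delta$ to $G'$ (the inverse operation, which recovers $G$) would, by Part 2, yield a planar graph, contradicting the non-planarity of $G$. Overall, the main content lies in Part 2, and the only obstacle worth flagging is making sure the reduction to the classical Y-$\Delta$ faithfully produces the YY-$\Delta$ output; this is just a matter of bookkeeping on which edges and vertices are present at each step.
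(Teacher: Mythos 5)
Your proofs of Parts 1 and 2 match the paper's in substance (the paper simply places the new vertex in the triangular face for Part 1, and for Part 2 directly swaps the embedded wye for an embedded delta, which is the same idea as your reduction to the classical Y-$\Delta$). Where you genuinely diverge is Part 3: you obtain it as an immediate corollary of Part 2 by contraposition, using the fact that $\Delta$-YY followed by YY-$\Delta$ at the same site is the identity, so a planar $\Delta$-YY image would force the original graph to be planar. This argument is correct --- the wye created by the $\Delta$-YY is a legitimate wye (its centre is 6-valent with exactly the three double connections), so Part 2 applies and recovers $G$ --- and it is considerably shorter than the paper's proof, which instead runs a direct case analysis on a Kuratowski subdivision of $G$, splitting according to how many of the three delta edges (zero, one, two, or all three) lie in the subdivision and explicitly constructing a $K_5$ or $K_{3,3}$ subdivision in the transformed graph in each case. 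Your route buys brevity and makes the logical dependence on Part 2 transparent; the paper's route is self-contained at the level of Kuratowski subgraphs and yields the extra structural information of exactly which forbidden subdivision survives or is created (for instance, that a $K_5$ subdivision using all three delta edges becomes a $K_{3,3}$ subdivision), but neither of those extras is needed for the proposition as stated.
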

\begin{proof}
Proof of 1: Consider a planar embedding of $G$ where the delta we transform is a face of the embedding.  Put the new vertex for the wye inside this face.  This gives a planar embedding of the transformed graph. 

Proof of 2: Consider a planar embedding of $G$.  We can replace the planar embedding of the wye shape with a planar embedding of the delta shape to get a planar embedding of the new graph. 

Proof of 3: Consider a non-planar graph $G$, and the graph $G'$ obtained by performing a $\Delta$-YY transformation on $G$ with the vertices of the delta being $u,v,w$.  $G$ must have a subdivision of either $K_5$ or $K_{3,3}$ since it is non-planar. If the subdivision does not involve any edges of the delta that we transform, then it still exists in $G'$, and $G'$ is non-planar. If the subdivision involves exactly one edge of the delta, say edge $uv$, then we can replace this part of the subdivision with the path $uxv$, where $x$ is the centre vertex of the new wye, which gives us another subdivision of $K_5$ or $K_{3,3}$. 

At the other extreme, suppose now that all three edges of the delta are in the subdivision. Then all three vertices of the delta must have degree greater than 2 in the subdivisions and the subdivision in $G$ must be $K_5$, as $K_{3,3}$ has no triangles. Call the vertices of the $K_5$ subdivision with degree greater than 2 $s$, $t$, $u$, $v$, and $w$. 
We had paths from $s$ and $t$ to each of $u$, $v$, and $w$ and to each other in our original subdivision, these remain unchanged after the $\Delta$-YY transformation. We also had edges $uv$, $uw$, and $vw$, these edges are deleted, we add in a new vertex $x$ and edges $\{ux, ux, vx, vx, wx, wx\}$. Now we can see that in $G'$ we have paths from each of $s$, $t$, and $x$ to each of $u$, $v$, and $w$, for a total of 9 paths. This gives us a subdivision of $K_{3,3}$ in $G'$, so $G'$ is non-planar. 

Finally, consider the case where two of the edges of the delta are in the subdivision, say $uv$ and $vw$. If $v$ only has degree 2 in the subdivision, then we could replace the path $uvw$ in the subdivision with $uw$ and follow the case above. Otherwise, $v$ has degree 3 or 4 in the subdivision, and thus represents a vertex of $K_5$ or $K_{3,3}$. We claim that upon performing a $\Delta$-YY transformation, $G'$ will have a subdivision of $K_{3,3}$. 
To prove this claim 
there are two cases, the case where $G$ has a subdivision of $K_{3,3}$ and the case where $G$ has a subdivision of $K_5$. 

Case 1: $G$ has a subdivision of $K_{3,3}$. We know that $v$ is a vertex of degree 3 in this subdivision, and we know that $u$ and $w$ are in the subdivision. Let $u'$ and $w'$ be the vertices of degree 3 in the subdivision along the paths from $v$ including $u$ and $w$ respectively and let $z$ be the third vertex of degree 3 in the subdivision which is joined to $v$.  
Note that it is possible that $u=u'$ or $w=w'$. Now perform a $\Delta$-YY transformation on the delta $\{u,v,w\}$. 
Where we previously had paths $v,\dots, w'$, $v,\dots, u'$, and $v,\dots, z$,
we now have $x,\dots, w'$, $x\dots, u'$, and $x, v, \dots, z$. The rest of the subdivision remains unchanged. Hence, we have a subdivision of $K_{3,3}$ in $G'$ and $G'$ is thus non-planar. 

Case 2: $G$ has a subdivision of $K_5$. We know that $v$ is a vertex of degree 4 in this subdivision, and we know that $u$ and $w$ are in the subdivision. Similarly to the previous case, let $u'$ and $w'$ be the vertices of degree 4 in the subdivision along the paths from $v$ containing $u$ and $w$ respectively.  Let $s$ and $t$ be the other two vertices of degree 4 in the subdivision.
Perform a $\Delta$-YY transformation on the delta $\{u,v,w\}$ with new vertex $x$.  Then $G'$ contains a subdivision of $K_{3,3}$ with $\{x,s,t\}$ and $\{v,u',w'\}$ the two parts of the bipartition,  where the $s,\dots, v$, $s, \dots, u'$, $s, \dots, w'$, $t, \dots, v$, $t, \dots, u'$, and $t, \dots w'$ paths are unchanged from the subdivision in $G$ and the remaining paths are $xv$, $xu,\dots, u'$, and $xw, \dots w'$ where the paths $u, \dots, u'$ and $w, \dots, w'$ were from the original subdivision and are thus disjoint from all aforementioned paths.

\end{proof}

\subsection{Independent Sets}
Later we will have cause to consider bipartite graphs and it will be useful to understand how independent sets transform under the $\Delta$-YY operations.
Recall that an independent set is a set of vertices in a graph $G$ where no two vertices in the set share an edge. Consequently independent sets behave under $\Delta$-YY operations as they do under $\Delta$-Y operations.  We outline this behaviour below.

\begin{lemma}\label{lem indep1}
Given a graph $G$ and an independent set $W$, $W$ is also an independent set in the graph $G'$ given by performing a $\Delta$-YY transformation on $G$.
\end{lemma}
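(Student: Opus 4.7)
The plan is to verify independence directly from the definitions, comparing the edge set of $G'$ to that of $G$. Suppose the $\Delta$-YY transformation is applied to a triangle on vertices $\{u,v,w\} \subseteq V(G)$, producing a new vertex $x$ and the new edge multiset $\{ux, ux, vx, vx, wx, wx\}$. Then $V(G') = V(G) \cup \{x\}$ and
\[
E(G') = \bigl(E(G) \setminus \{uv, uw, vw\}\bigr) \cup \{ux, ux, vx, vx, wx, wx\}.
\]
Since $W \subseteq V(G)$ and $x \notin V(G)$, we have $W \subseteq V(G')$ and $x \notin W$.

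The argument then splits according to which edges of $G'$ could possibly witness a failure of independence. First, every edge in $E(G') \cap E(G)$ has both endpoints in $V(G)$, and since $W$ is independent in $G$, no such edge has both endpoints in $W$. Second, each of the six new edges is incident to $x$, and as $x \notin W$, none of them has both endpoints in $W$ either. Hence no edge of $G'$ has both endpoints in $W$, so $W$ is independent in $G'$.

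I do not expect any real obstacle here; the statement is essentially a bookkeeping observation that the $\Delta$-YY transformation only removes edges among $\{u,v,w\}$ (which cannot have both endpoints in an independent set to begin with) and introduces edges incident to a freshly created vertex (which therefore lies outside $W$). The only mild subtlety is remembering that the multiplicity of the new edges is irrelevant for the question of independence, since independence depends only on the underlying simple adjacency relation.
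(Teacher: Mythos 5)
Your proof is correct and follows the same route as the paper's: the transformation only deletes connections between existing vertices and every new edge is incident to the freshly added vertex $x \notin W$, so independence of $W$ is preserved. Your version just spells out the edge-set bookkeeping more explicitly.
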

\begin{proof}
Let $G$ be a graph and let $W$ be an independent set of $G$. 
A $\Delta$-YY transformation only removes connections between existing vertices, and does not add any connections between existing vertices. Thus, any independent set in $G$ will also be independent in $G'$.  
\end{proof}
Define $\alpha(G)$ to be the maximum size of an independent set of $G$. 
\begin{prop}
Given a graph $G$ and the graph $G'$ given by performing a $\Delta$-YY transformation on $G$, $\alpha(G)\leq \alpha(G')\leq \alpha(G)+2$.  Additionally, each bound can be attained by some $G,G'$ pairs.
\end{prop}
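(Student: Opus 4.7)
The lower bound $\alpha(G)\le\alpha(G')$ is immediate from Lemma~\ref{lem indep1}: take a maximum independent set $W$ of $G$; by the lemma it is still independent in $G'$, so $\alpha(G')\ge|W|=\alpha(G)$.

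For the upper bound my plan is to start with a maximum independent set $W'$ of $G'$ and exhibit an independent set of $G$ of size at least $|W'|-2$. The key observation to keep in mind throughout is that the vertex sets of $G$ and $G'$ differ only by the new centre vertex $x$, and the only edges that differ are the three triangle edges $uv,uw,vw$ (present in $G$, absent in $G'$) and the double edges incident to $x$ (absent in $G$, present in $G'$). In particular, any pair of vertices neither of which lies in $\{u,v,w,x\}$ are adjacent in $G$ if and only if they are adjacent in $G'$, and any adjacency in $G'$ between a vertex outside $\{u,v,w\}$ and a vertex of $\{u,v,w\}$ is also an adjacency in $G$.

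I would split into two cases. If $x\in W'$, then since $x$ is adjacent in $G'$ to each of $u,v,w$ via double edges, none of $u,v,w$ can lie in $W'$; hence $W'\setminus\{x\}$ avoids $\{u,v,w\}$ entirely, and by the observation above it is independent in $G$, giving $\alpha(G)\ge|W'|-1\ge\alpha(G')-2$. If $x\notin W'$, then $W'\subseteq V(G)$, and the only way $W'$ could fail to be independent in $G$ is through the reinstated triangle edges, i.e.\ if $W'$ contains more than one of $u,v,w$. Deleting all but one of those forced collisions removes at most two vertices, leaving a set of size at least $|W'|-2$ which is independent in $G$. In either case $\alpha(G)\ge\alpha(G')-2$, as required.

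I do not anticipate a real obstacle: the argument is essentially bookkeeping on the four distinguished vertices $u,v,w,x$. The only thing that needs some care is the direction of the case analysis, i.e.\ starting from a maximum independent set of $G'$ rather than of $G$, since independent sets only grow going from $G$ to $G'$ and one needs the reverse direction to control $\alpha(G')$ from above. It is also worth noting that the bound is tight at both ends: taking $G=K_3$ gives $\alpha(G)=1$ and $\alpha(G')=3$, while for many larger $G$ where the maximum independent set avoids the chosen triangle the value of $\alpha$ is unchanged.
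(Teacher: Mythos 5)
Your proposal is correct and follows essentially the same route as the paper: the lower bound via Lemma~\ref{lem indep1}, and the upper bound by a case analysis on whether the maximum independent set of $G'$ contains the new centre vertex $x$ (in which case removing $x$ gives an independent set of $G$) or not (in which case at most two of $u,v,w$ must be discarded to restore independence across the reinstated triangle edges). The paper phrases the upper bound as a proof by contradiction and only explicitly treats the subcase where all three of $u,v,w$ lie in $W'$, so your direct version handling ``more than one of $u,v,w$'' uniformly is, if anything, slightly cleaner.
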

\begin{proof}
Let $G$ be a graph and let $W$ be a maximum independent set of $G$. We know from Lemma~\ref{lem indep1} that $W$ is an independent set in $G'$, the graph obtained by performing a $\Delta$-YY transformation on $G$. Thus $\alpha(G)\leq \alpha(G')$. Note that there do exist graphs where equality holds, in special cases when $W$ contains one vertex of the delta and a neighbour of each of the other two vertices in the delta, as for instance in a double triangle. 

Now consider a special case where there exists a maximum independent set $W$ of $G$ that contains exactly one vertex of the delta we plan to transform, say vertex $u$, and that does not contain the neighbours of $v$ or $w$, the other two vertices of the delta, except for $u$ as for example in $K_4$. 
Now when we perform the $\Delta$-YY transformation, we remove the edges $\{uv, vw, uw\}$ that connect $u$, $v$, and $w$ to each other. Since $v$ and $w$ are no longer adjacent to $u$ or each other, none of their other neighbours are in $W$, and $W$ is an independent set in $G'$, we can create a new independent set $W'=W\cup \{v,w\}$. Thus, there exist a graph $G$ and a delta where performing a $\Delta$-YY transformation on said $G$ and delta gives a new graph $G'$ where $\alpha(G')=\alpha(G)+2$. This shows that our upper bound is attained. 

Now it remains to show that there is no circumstance where $\alpha(G')\geq \alpha(G)+3$. 
Looking at the wye in $G'$, we can see that any independent set could include either the centre vertex $x$ or up to three of the external vertices $u$, $v$, and $w$. Say $W'$ is a maximum independent set of $G'$ and $x\in W'$, then the rest of $W'$ must involve vertices outside of the wye, we know that these vertices and their connections remain unchanged under the $\Delta$-YY transformation, so $W=W'\setminus\{x\}$ must be an independent set in $G$. In this case, $\alpha(G')=|W'|=|W|+1\leq \alpha(G)+1$. Now consider the case where $u,v,w\in W'$. Note that this means that none of their neighbours can be in $W'$. Again, the remainder of $W'$ must be made up of vertices in $G'$ that are not in the wye, and thus not affected by the transformation. So we know there exists an independent set $W''=W'\setminus \{u,v,w\}$ in $G$. However, since reversing the $\Delta$-YY transformation only changes the connections between $u$, $v$, and $w$, and not their connections with their neighbours, and we know that none of their neighbours are in $W'$, we find that we can add the vertex $u$ to $W''$ to get a new independent set $W$ in $G$. Thus $\alpha(G')=|W'|=|W|+2\leq \alpha(G)+2$. Similarly, taking a subset $S$ of $\{u, v, w\}$ to be in $W'$ will give an upper bound of $\alpha(G)+1$ for $|S|=2$ and $\alpha(G)$ for $|S|=1$ and $|S|=0$. Having looked at all cases, we have shown that, $\alpha(G')\leq \alpha(G)+2$. Therefore $\alpha(G)\leq \alpha(G')\leq \alpha(G)+2$.

\end{proof}
We have now seen that the $\Delta$-YY transformation preserves a lower bound on the maximal size of independent sets, and in some cases allows for even larger independent sets. The YY-$\Delta$ transformation can sometimes cause the size of a maximal independent set to decrease.
\begin{prop}
Given a graph $G$ and an independent set $W$, the graph $G'$ given by performing a YY-$\Delta$ transformation on $G$ also has $W$ as an independent set if one or fewer of the external vertices of the transformed wye are in $W$, and the centre of the transformed wye is not in $W$.
\end{prop}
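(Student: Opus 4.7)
The plan is to verify directly that $W$ remains independent in $G'$ by tracking exactly which edges change under a YY-$\Delta$ transformation. Let $x$ be the centre of the wye and $u,v,w$ its external vertices. By the definition of a wye, every edge incident to $x$ in $G$ is one of the six edges $\{ux,ux,vx,vx,wx,wx\}$, and the YY-$\Delta$ transformation deletes the vertex $x$ together with exactly these six edges, then adds the three new edges $uv, uw, vw$. All other edges and vertices of $G$ are untouched.

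I would organize the check as three observations. First, since the hypothesis states $x\notin W$, the set $W$ is contained in $V(G')=V(G)\setminus\{x\}$, so it makes sense to speak of $W$ as a subset of $V(G')$. Second, every edge of $G'$ other than $uv,uw,vw$ is already an edge of $G$; since $W$ is independent in $G$, no such edge has both endpoints in $W$. Third, the only candidates for new violations are the three edges $uv,uw,vw$, and such an edge has both endpoints in $W$ only if at least two of $u,v,w$ belong to $W$. The hypothesis that at most one external vertex lies in $W$ rules this out, so none of the new edges creates a conflict. Combining these three observations shows that $W$ is independent in $G'$.

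There is no real obstacle here; the argument is a local bookkeeping statement analogous to Lemma~\ref{lem indep1}. The only place where one has to be careful is in confirming that the definition of a wye really does guarantee that every edge incident to $x$ is removed (so that the only edges of $G'$ involving $u,v,w$ that are not already in $G$ are the three new triangle edges), and this is immediate from the 6-valence condition on $x$ in the definition of the YY-$\Delta$ transformation.
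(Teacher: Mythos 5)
Your proposal is correct and takes essentially the same approach as the paper: a direct local verification that the only possible new independence violations come from the three added triangle edges $uv,uw,vw$, which the hypothesis on $W$ rules out. Your edge-by-edge bookkeeping is if anything slightly cleaner, since it handles the cases of zero or one external vertex in $W$ uniformly, whereas the paper's argument is phrased around the single vertex $u\in W$.
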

\begin{proof}
Let $G$ be a graph with independent set $W$ and let $G'$ be the graph given by performing a YY-$\Delta$ transformation on $G$. Assume that one or fewer of the vertices of the transformed wye are in $W$.
Let $u$, $v$, and $w$ be the external vertices of the wye and let $x$ be the centre of the wye. Hence, $x\not \in W$ and one of the external vertices, say vertex $u$, is in $W$. Since nothing outside the wye is changed by the YY-$\Delta$ transformation, $W\setminus \{u\}$ is clearly an independent set in $G'$. Furthermore, we know that none of the neighbours of $u$ in $G$ are in $W$, and $u$ only gains the neighbors $v$ and $w$ in $G'$, neither of which are in $W$. Therefore, $W$ is an independent set in $G'$.

\end{proof}

\subsection{Connectivity}
Vertex connectivity also behaves under $\Delta$-YY operations as it does for the classical $\Delta$-Y operations, however, small vertex separations lead to identities of Feynman periods and substantially simplify Feynman period calculations, so we will lay out mathematically how connectivity behaves under the $\Delta$-YY operations.

\begin{prop}\label{prop connectivity}
Connectivity is preserved by a $\Delta$-YY operation.
\end{prop}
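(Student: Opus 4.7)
\medskip\noindent\textbf{Proof plan.} I read ``connectivity'' here as meaning \emph{being a connected graph} (i.e.\ $1$-connectedness); higher vertex connectivity is genuinely destroyed by the operation, since the new vertex $x$ always has only three distinct neighbours and hence $\kappa(G')\le 3$, and one already sees $\kappa$ drop from $3$ to $2$ on applying a $\Delta$-YY to a triangle of $K_4$. So the aim is to show that $G$ is connected if and only if $G'$ is connected, both for the $\Delta$-YY and for the YY-$\Delta$ direction.

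The argument will be a direct local substitution on paths. For a $\Delta$-YY taking a triangle $\{u,v,w\}$ to a wye with centre $x$: assume $G$ is connected and fix two vertices $a,b\in V(G')$. If $a=x$, first step from $x$ to any of $u,v,w$ along one of the double edges of $G'$ and reduce to the case $a\in V(G)$. Then take a path between $a$ and $b$ in $G$ and substitute each triangle edge $pq$ that appears in it by the length-two detour $p,x,q$ in $G'$; this gives an $a$-$b$ walk in $G'$, and any such walk contains an $a$-$b$ path. Conversely, assume $G'$ is connected and fix $a,b\in V(G)$. Take a path between them in $G'$ and replace each length-two segment $p,x,q$ (with $p,q\in\{u,v,w\}$) by the triangle edge $pq\in E(G)$; if $p=q$, simply cut the $x$-detour out. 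Again this yields a walk in $G$ and hence a path. The YY-$\Delta$ direction is exactly the same argument read with the roles of the triangle edges and the $p,x,q$ detours interchanged.

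No step here is hard; the only piece of bookkeeping is the walk-versus-path distinction, which is immediate. I would flag up front that the statement is intentionally this weak, because the more refined information about $2$-cuts and $3$-cuts, which is what is actually needed for the Feynman-period identities mentioned at the start of the subsection, must then be treated separately and with more care, since the operation does not preserve higher $\kappa$ unconditionally.
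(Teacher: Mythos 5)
Your proof is correct and takes essentially the same approach as the paper, which likewise argues that since both the delta and the wye are connected, any path through one shape can be rerouted through the other while all other paths are unaffected. Your reading of ``connectivity'' as meaning $1$-connectedness matches the paper's intent, and your side remarks about higher connectivity are consistent with the separate propositions the paper devotes to $2$- and $3$-connectivity.
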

\begin{proof}
Both the delta and wye shapes are connected, so any vertices joined by a path using one remain joined by a path using the other, and other paths in the graph are unaffected. 
\end{proof}

\begin{prop}
2-connectivity is preserved by the YY-$\Delta$ transformation. 
\end{prop}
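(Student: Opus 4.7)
The plan is to verify 2-connectivity of $G'$ by invoking the Menger--Whitney characterization: it suffices to show that any two distinct vertices $a,b \in V(G')$ are joined by two internally disjoint paths in $G'$. Here $V(G') = V(G) \setminus \{x\}$, where $x$ is the centre of the wye in $G$ and $u, v, w$ are its external vertices.

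First I would fix any such $a, b$. Since both lie in $V(G)$ and $G$ is 2-connected, the Menger--Whitney theorem supplies two internally disjoint $a$-$b$ paths $P_1, P_2$ in $G$. The next observation is that at most one of these paths can pass through $x$ internally: $x$ is a single vertex and is not an endpoint of either path (as $a,b \ne x$), so if $x$ were internal to both $P_1$ and $P_2$ then they would share that internal vertex, contradicting internal disjointness. If neither path uses $x$, both paths already lie in $G'$ and we are done.

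Otherwise, without loss of generality $P_1$ contains a subpath of the form $y - x - z$ with $y, z \in \{u, v, w\}$, and $y \ne z$ because a path cannot revisit a vertex (which rules out traversing two parallel $yx$ edges in succession). I would then replace this subpath in $P_1$ by the single new triangle edge $yz \in E(G')$, producing a path $P_1'$ from $a$ to $b$ in $G'$ whose internal vertex set is the internal vertex set of $P_1$ with $x$ removed. In particular $P_1'$ and $P_2$ remain internally disjoint in $G'$, yielding the required pair of paths.

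The only real (and mild) obstacle is the bookkeeping around the multi-edge structure: confirming that $y \ne z$ at the two endpoints of the $x$-crossing subpath of $P_1$, and that the replacement edge $yz$ genuinely belongs to $G'$. Both facts are immediate from the definitions, since a path does not repeat vertices and the YY-$\Delta$ transformation inserts $uv$, $uw$, and $vw$ into $E(G')$ by construction, so no further casework (for instance based on whether $a$ or $b$ lies in $\{u,v,w\}$) is needed.
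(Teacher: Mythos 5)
Your proof is correct and follows essentially the same route as the paper: the paper uses the equivalent ``any two vertices lie on a common cycle'' characterization of 2-connectivity, which is just your two internally disjoint paths glued together, and performs the identical replacement of the $y$--$x$--$z$ detour by the new triangle edge $yz$. No gap.
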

\begin{proof}
Consider a 2-connected graph $G$ and the graph $G'$ found by performing a YY-$\Delta$ transformation on a wye of $G$. Recall that a graph is 2-connected if and only if any two vertices of the graph lie in a common cycle. Thus, we know that this is true for $G$ and we want to show that it is true for $G'$. Consider any two vertices $a$ and $b$ of $G'$. We know that these vertices existed in $G$ and thus lie in a common cycle $C$ in $G$. If $C$ does not include any edges of our transformed wye, then the same cycle exists in $G'$ and thus $a$ and $b$ lie in a common cycle in $G'$. If $C$ does include an edge from our transformed wye, it must contain exactly two of these edges (one goes to the centre vertex and one exits the centre vertex), say that these are edges $ux$ and $vx$. Since $x$ is not in $G'$, we can replace these two edges with the new edge $uv$, to get a cycle in $G'$ that contains all the same vertices as $C$ did, except $x$, which is not in $G'$. Thus, there exists a cycle in $G'$ that contains both vertices $a$ and $b$. Therefore, $G'$ is 2-connected. 
 \end{proof} 
 \begin{prop}
 $k$-connectivity is never preserved by the $\Delta$-YY operation for $k\geq 4$. 
 \end{prop}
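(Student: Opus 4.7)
The plan is to exploit the fact that the central vertex $x$ of any wye is $6$-valent but has only three distinct neighbours $u, v, w$, so any graph containing a wye can be separated by deleting just three vertices.

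First I would record an elementary lemma: if a vertex $y$ of a graph $H$ has at most three distinct neighbours and $|V(H)| \geq 5$, then deleting $N(y)$ isolates $y$ from the remaining vertices, so the vertex connectivity of $H$ is at most $3$.  (For $|V(H)| \leq 4$ the simple-graph connectivity is bounded by $|V(H)| - 1 \leq 3$ anyway, so the same upper bound holds.)

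Then I would apply this in both directions of the operation.  For a $\Delta$-YY transformation producing $G'$ from a graph $G$ that is $k$-connected with $k \geq 4$: the new centre $x$ of the wye in $G'$ has exactly the three distinct neighbours $\{u, v, w\}$, so the lemma forces $G'$ to have connectivity at most $3$, and hence $G'$ is not $k$-connected.  For a YY-$\Delta$ transformation applied to $G$: the lemma applied directly to $G$, which already contains a wye, shows that $G$ itself has connectivity at most $3$, so no $k$-connected graph with $k \geq 4$ can even admit a YY-$\Delta$ transformation, and the proposition holds vacuously in that direction.

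There is no substantive obstacle here; the argument is a direct counting remark distinguishing vertex degree from the number of distinct neighbours.  The only mild care needed is with the degenerate small cases (the triangle $K_3$ itself, or the wye viewed as an entire $4$-vertex graph), but in both cases the graph has too few vertices to be $k$-connected for $k \geq 4$ to begin with, so they cause no trouble.
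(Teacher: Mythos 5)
Your argument is correct and is essentially the paper's own proof: both rest on the single observation that the centre of a wye has only three distinct neighbours, so any graph containing a wye (the output of a $\Delta$-YY transformation, or the input of a YY-$\Delta$ transformation) has vertex connectivity at most $3$. Your treatment of the small-vertex degenerate cases is a harmless extra precaution that the paper omits.
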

 \begin{proof}
 Let $k\geq 4$. Since the centre of a wye only has 3 neighbours, any graph with a wye is at most 3-connected, thus any $\Delta$-YY transformation gives a graph which is at most 3-connected.
 \end{proof}
 \begin{prop}
 The $\Delta$-YY transformation preserves 3-connectivity when each vertex of the transformed delta has at least four neighbours.
 \end{prop}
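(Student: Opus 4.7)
The plan is to verify 3-connectivity of $G'$ by checking that $G'-S$ is connected for every two-element subset $S\subseteq V(G')$, writing $x$ for the new vertex and $u,v,w$ for the transformed triangle. I split on whether $x\in S$. When $x\notin S$, the 3-connectivity of $G$ gives $G-S$ connected, and $G'-S$ is obtained from $G-S$ by deleting any remaining triangle edges and attaching $x$ via double edges to the triangle vertices not in $S$; any removed triangle edge can be rerouted through $x$, so connectedness is preserved. When $S=\{x,s\}$ with $s\in\{u,v,w\}$, say $s=u$, we get $G'-S=G-u-vw$, which is connected because $G-u$ is 2-connected and removing one edge leaves a connected graph.

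The main case is $S=\{x,s\}$ with $s$ outside the triangle, so that $G'-S=G-s-\{uv,uw,vw\}$; it therefore suffices to prove that $G_1:=G-\{uv,uw,vw\}$ is 2-connected. Connectedness of $G_1$ follows because $G_1-w=G-w-uv$ is connected (remove a vertex and an edge from the 3-connected $G$), placing $u$ and $v$ in a common component, with symmetric arguments for $u,w$ and $v,w$; any vertex outside the triangle attaches through its edges, which are unchanged by the removal of triangle edges. For the no-cut-vertex check I must show $G_1-y$ is connected for every $y$; the case $y\in\{u,v,w\}$ mirrors the easy case above, so the work is when $y$ is outside the triangle.

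For such $y$ I connect $u$ and $v$ (and then symmetrically $u,w$ and $v,w$) in $G_1-y$ by examining $G-y-w-uv$. If $u,v$ lie in one component, a path in it suffices. Otherwise let $A\ni u$ and $B\ni v$ be the two components, and invoke the hypothesis that $w$ has at least two external neighbours, partitioned among $A$, $B$, and $\{y\}$. If $w$ has neighbours $c_A\in A$ and $c_B\in B$, the concatenation of a $u$-to-$c_A$ path in $A$, the edges $c_Aw$ and $wc_B$, and a $c_B$-to-$v$ path in $B$ gives the required $u$-$v$ path in $G_1-y$. Otherwise the external neighbours of $w$ all lie on one side, say in $A\cup\{y\}$; then the structural fact that the external neighbours of $v$ lie in $B\cup\{y\}$, combined with the hypothesis that $v$ has at least two of them, makes $B\setminus\{v\}$ nonempty and totally disconnected from the rest of $G-v-y$, contradicting the 3-connectivity of $G$. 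Finally, any vertex $z\notin\{u,v,w,y\}$ reaches $\{u,v,w\}$ through a shortest path in $G-y$, which uses no triangle edges (these only connect triangle vertices) and so lies in $G_1-y$. The main obstacle is this final dichotomy, where the hypothesis that each triangle vertex has at least four neighbours is used precisely to rule out the bad one-sided distribution of $w$'s external neighbours and to guarantee the rerouting through $w$.
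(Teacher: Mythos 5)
Your proof is correct, but it takes a genuinely different route from the paper. The paper works with the Menger--Whitney characterization: it first establishes the key claim that between any two vertices of $G$ there are three independent paths of which at most one uses an edge of the triangle $T$ (this is where the degree hypothesis enters there), and then splits into cases according to whether the endpoints are $x$ or triangle vertices, splicing and merging path families. You instead verify the cut-set definition directly, and the whole proposition collapses onto one clean structural lemma: $G_1 := G - \{uv,uw,vw\}$ is $2$-connected. Your degree hypothesis enters at a single, well-isolated point --- ruling out the configuration in which all external neighbours of one triangle vertex land on one side of the edge-deletion split, which would force a $2$-cut of $G$ --- and the dichotomy on where $w$'s external neighbours fall (both sides versus one side plus possibly $y$) is exhaustive and correctly handled by symmetry. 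What your approach buys is the avoidance of the delicate independence bookkeeping in the paper's Cases 2 and 3 (where partially overlapping path families must be recombined), and it yields the reusable statement that deleting the triangle's edges from a $3$-connected graph whose triangle vertices each have at least four neighbours leaves a $2$-connected graph; what the paper's approach buys is that it stays within the independent-paths framework it sets up via Menger--Whitney and is closer in spirit to its other connectivity proofs. Two small points you may wish to make explicit: that reducing $3$-connectivity of $G'$ to connectivity of $G'-S$ over $2$-element sets $S$ is legitimate because every vertex of $G'$ still has at least three distinct neighbours, and that in the two-component case of your main argument the connection $uv$ must have size exactly one (a parallel edge would keep $u$ and $v$ in the same component), which is what guarantees that the external neighbours of $u$ and $v$ really do lie on their own sides.
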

 \begin{proof}
 Let $G$ be a 3-connected graph with a delta on the vertex set $\{u, v, w\}$, call this triangle $T$. Let $G'$ be the graph obtained by performing a $\Delta$-YY transformation on $T$ in $G$ to get a wye whose centre is a vertex $x$. 
 We claim that for every pair of vertices $a$ and $b$ in $G$, there exists a set of 3 independent $a-b$ paths where at most one of them contains an edge of the triangle $T$. Assume for contradiction that there does not exist a set of 3 independent $a-b$ paths for some $a$ and $b$ where at most one of them contains an edge of the triangle $T$. We know that $G$ is 3-connected so we must still have 3 independent $a-b$ paths, and thus at least two of them contain an edge of the triangle $T$. We note that these edges must be distinct since the paths are independent, but they also share a vertex in $T$, call it $u$. Since there does not exist a set of 3 independent $a-b$ paths in $G$ containing only one edge of $T$, these edges must also be in different paths. Finally we note that these paths are independent, so they can only both contain $u$ if $u$ is either point $a$ or $b$. So let $a=u$. Since $u$ has degree 4, it has two neighbours that are not in the triangle. We know that the maximum size of a set of independent $u-b$ paths is 3, and that two of these paths must go to neighbours on $T$, so any two $u-b$ paths through neighbours of $u$ not in $T$ must share a vertex, call it $y$. Then $\{u,y\}$ is a two vertex cut, contradicting the 3-connectivity of $G$. 
 Therefore, for every pair of vertices $a$ and $b$ in $G$, there exists a set of 3 independent $a-b$ paths where at most one of them contains an edge of the triangle $T$. We aim to show that $G$ is 3-connected. We will do this by showing that for every pair of vertices $c$ and $d$ in $G$, there exist at least 3 independent paths between them. We will break this down into three cases, $c\neq x, d \neq x$; $c=x, d\notin \{u, v, w\}$; and $c=x, d\in \{u, v, w\}$. 
 
 Case 1: Neither $c$ nor $d$ are the vertex $x$. In this case, $c$ and $d$ are both in $G$, and thus there exist 3 independent $c-d$ paths in $G$. If these paths do not involve any edges of $T$, then they exist in $G'$ and we are done. Otherwise, only one of these paths may involve an edge of $T$. If this path contains only one edge of $T$, say $uv$, then we can replace $uv$ with $uxv$ in this path to get 3 independent $c-d$ paths in $G'$. If this path contains two edges of $T$, they must be consecutive edges in the path, otherwise the path would have to repeat a vertex, so say these two edges are $uv$ and $vw$. Then we can replace $uvw$ with $uxw$ in this path to get 3 independent $c-d$ paths in $G'$. The path cannot contain three edges of $T$, as this would violate the definition of a path. Hence, in this case, there exist 3 independent $c-d$ paths in $G'$ for every pair $c$ and $d$ where neither one is equal to $x$. 
 
 Case 2: One vertex, say $c$, is equal to $x$, and the other vertex $d$ is not in the vertex set $\{u, v, w\}$. We want to find 3 independent $x-d$ paths in $G'$.  We see that $x$ has exactly 3 neighbours, so each one must be the second element of one of our 3 independent paths. Thus, we can simplify this problem to finding a $u-d$ path, a $v-d$ path, and a $w-d$ path that are all independent of each other. These vertices are all in $G$, and thus in $G$ there exist 3 independent $u-d$ paths, 3 independent $v-d$ paths, and 3 independent $w-d$ paths in $G$. We note that these paths are not necessarily all independent of each other. We also note that one of these paths for each pair of vertices could contain an edge of $T$, and thus we may only have 2 of the 3 independent paths for each pair of vertices in $G'$. Consider the 3 independent $u-d$ paths, either none of these paths contain edges of $T$, or one of them does, in which case we consider the last vertex of $T$ in the path, say $v$. In this first case, we get 3 independent $u-d$ paths in $G'$. In this second case, this will give us 3 independent paths in $G'$, two from $u$ to $d$ and one from $v$ to $d$. By Menger's theorem, there exist 3 independent $x-d$ paths if the minimum size of a vertex cut separating $x$ and $d$ is at least 3. Assume for contradiction that there exists a vertex cut $W$ of size 2 that separates $x$ and $d$. We know that we have 3 $x-d$ paths in $G'$, where at least two start with $xu$ and are independent from $u$ to $d$. Thus, to separate $x$ and $d$, we must include $u$ in $W$. Separately, we know that we have two independent $v-d$ paths and two independent $w-d$ paths in $G'$. There is no single vertex where all four of these paths intersect, thus it is impossible to separate $x$ from $d$ with a vertex cut of size 2. Thus, a minimum vertex cut separating $x$ and $d$ in $G'$ has size at least 3, and by Menger's theorem, there must exist 3 independent $x-d$ paths in $G'$. 
 
 Case 3: One vertex, say $c$, is equal to $x$, and the other vertex $d$ is in the vertex set $\{u, v, w\}$. Without loss of generality, we will say that $d=u$, since $u$, $v$, and $w$ are indistinguishable. We know that $x$ has only three neighbours, so one of our three independent paths will be $xu$, which is obviously independent of any other $x-u$ path, one of our paths must start with $xv$, and the last must start with $xw$. We will look to $G$ to find a $v-u$ and a $w-u$ path in $G'$. To begin, we know that there exist 3 independent $v-u$ paths and 3 independent $w-u$ paths in $G$, where only one path from each of these sets could contain an edge of $T$. Thus, in $G'$, we have at least 2 independent $v-u$ paths, $P_{1v}$ and $P_{2v}$, and at least 2 independent $w-u$ paths, $P_{1w}$ and $P_{2w}$. Of these four paths, if there exists one $v-u$ path and one $w-u$ path that are independent of each other, then we can take these two paths with $x$ appended to the start to get 2 additional independent $x-u$ paths, for a total of 3 independent $x-u$ paths. Otherwise, both $P_{1v}$ and $P_{2v}$ must intersect with both $P_{1w}$ and $P_{2w}$. Starting from $v$, travel along $P_{1v}$ until we reach the first time it intersects with one of the $w-u$ paths, without loss of generality, assume that this first intersection occurs with the path $P_{1w}$ and call this vertex of intersection $i$. Then the paths $P_1=x\cup P_{1v}iP_{1w}$ and $P_2=x\cup P_{2w}$ are two independent $x-u$ paths. We know that $P_{1v}i$ does not intersect $P_{2w}$ because we defined $i$ to be the first time $P_{1v}$ intersected one of the $w-u$ paths. We know that $iP_{1w}$ does not intersect $P_{2w}$, except at $u$, because $i$ is some vertex after $w$, so $w\notin iP_{1w}$, and $P_{1w}$ and $P_{2w}$ only intersect at the points $w$ and $u$. Hence, $P_1$, $P_2$, and $xu$ are 3 independent $x-u$ paths. 
 
 Therefore, the $\Delta$-YY transformation preserves 3-connectivity when there exists a set of 3 independent paths in $G$ between any two vertices $a$ and $b$ where at most one of them contains an edge of the delta that is transformed.
 \end{proof}
 \begin{prop}
 The YY-$\Delta$ transformation preserves 3-connectivity. 
 \end{prop}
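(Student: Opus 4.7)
The plan is to apply the Menger–Whitney characterization, showing that for any two vertices $a, b \in V(G')$ there exist three internally vertex-disjoint $a$-$b$ paths in $G'$. The setup is that $G$ is $3$-connected with a wye whose centre is $x$ and external vertices $u, v, w$, and $G'$ is obtained by deleting $x$ (and all six edges incident to it) and adding the triangle edges $uv$, $uw$, $vw$. Since $G$ is $3$-connected it has at least $4$ vertices; a direct check shows that no $3$-connected $G$ can have exactly $4$ vertices while containing such a wye (removing $\{x,u\}$ would isolate $v$ from $w$), so $G'$ has at least $4$ vertices and the notion of $3$-connectivity for $G'$ is meaningful.

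Fix $a, b \in V(G') = V(G) \setminus \{x\}$. By Menger applied to $G$, there are three internally disjoint $a$-$b$ paths $P_1, P_2, P_3$ in $G$. Because the internal vertices of these three paths are pairwise disjoint, at most one of them contains $x$ as an internal vertex; relabel so that $P_2$ and $P_3$ avoid $x$. Those two paths use only edges of $G$ not incident to $x$, so they survive unchanged as paths in $G'$. If $P_1$ also avoids $x$ we are done, so suppose $P_1$ passes through $x$. Since $P_1$ is a path, it must enter $x$ from some external vertex $y \in \{u,v,w\}$ and exit to a \emph{different} external vertex $z \in \{u,v,w\}$ (if it entered and exited through the same external vertex it would revisit that vertex, violating path-ness; the two parallel edges in the wye are not usable in succession for this reason). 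Thus $P_1$ has the form $a \to \cdots \to y \to x \to z \to \cdots \to b$.

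Now I would replace the sub-path $y \to x \to z$ by the single new edge $yz$, which exists in $G'$ as one of the three triangle edges. Call the resulting walk $P_1'$. Its vertex set equals that of $P_1$ with $x$ removed, so $P_1'$ is still a path and its internal vertices are a subset of the internal vertices of $P_1$; in particular $P_1'$ remains internally disjoint from $P_2$ and $P_3$. The new edge $yz$ is incident only to $y$ and $z$, which are internal (or endpoint) vertices of $P_1'$, so no collision is introduced with $P_2$ or $P_3$. Therefore $P_1', P_2, P_3$ are three internally disjoint $a$-$b$ paths in $G'$, and Menger gives $3$-connectivity of $G'$. The only subtlety I foresee is bookkeeping the case $\{a,b\} \cap \{u,v,w\} \neq \emptyset$, where one of $y, z$ could coincide with an endpoint; this is handled automatically because Menger only requires internally disjoint paths and the substitution above preserves the set of internal vertices modulo removing $x$.
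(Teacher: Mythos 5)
Your proof is correct and follows essentially the same route as the paper's: take three independent $a$-$b$ paths in $G$ via Menger, observe that at most one can pass through the wye centre $x$ (entering and leaving via distinct external vertices), and replace the two wye edges on that path with the corresponding new triangle edge, preserving independence. Your additional care about $G'$ having at least four vertices and about endpoints lying in $\{u,v,w\}$ is a welcome refinement but does not change the argument.
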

 This is as we would expect from quantum field theory as 3-separations lead to products of periods, see the exposition in \cite{Sphi4}.
 
 \begin{proof}
 Consider a 3-connected graph $G$ and the graph $G'$ found by performing a YY-$\Delta$ transformation on a wye of $G$. Recall that a graph is 3-connected if and only if for any two vertices of the graph, there exist 3 independent paths between them. So we know that this holds for $G$ and we want to show that it is the case for $G'$. Consider any two vertices $a$ and $b$ in $G'$, they also exist in $G$ and thus there exist at least 3 independent paths between them in $G$. If these paths do not involve any edges of the wye, they remain unaffected, and thus exist in $G'$. If any of these paths do involve edges of the wye, then they must contain exactly two of these edges (one goes to the centre vertex and one exits the centre vertex), say that these are edges $ux$ and $vx$. Since $x$ is not in $G'$, we can replace these two edges with the new edge $uv$, to get a path in $G'$ that contains all the same vertices as $C$ did, except $x$, which is not in $G'$. Hence, this set of paths is still independent in $G'$. Thus, there exist at least 3 independent paths between the vertices $a$ and $b$. Therefore, $G'$ is 3-connected. 
 \end{proof}

\subsection{Cyclomatic Number}

 The cyclomatic number is important in quantum field theory where it is known as the loop number.  The cyclomatic number transforms in a fairly straightforward way under the $\Delta$-YY operations, however it is also useful to consider the number of cycles larger than 2, that is the cyclomatic number of $G^S$ rather than of $G$ under $\Delta$-YY operations.  This is a more complicated question but the methods are quite similar to what we will use in Section~\ref{sec simple} and so it serves as a good warm up.

\begin{prop}
Let $G$ be a connected graph with cyclomatic number $c$ and let $G'$ be the graph obtained by performing a $\Delta$-YY operation on $G$ with cyclomatic number $c'$. Then,
\begin{enumerate}
    \item If we perform a $\Delta$-YY transformation to get $G'$, then $c'=c+2$
    \item If we perform a YY-$\Delta$ transformation to get $G'$, then $c'=c-2$
\end{enumerate}
\end{prop}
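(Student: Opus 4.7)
The plan is to use the standard formula that for a connected graph $H$, the cyclomatic number equals $|E(H)|-|V(H)|+1$. Since Proposition~\ref{prop connectivity} already guarantees that connectivity is preserved by either direction of the $\Delta$-YY operation, applying this formula to both $G$ and $G'$ reduces the problem to bookkeeping of how many vertices and edges are added or removed.

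First I would recall the elementary observations made at the start of Section~\ref{sec props}: a $\Delta$-YY transformation adds one vertex and has net edge change $+3$ (three triangle edges deleted, six double-wye edges added), while a YY-$\Delta$ transformation deletes one vertex and has net edge change $-3$. Then for part (1), writing $c = |E(G)| - |V(G)| + 1$ and $c' = |E(G')| - |V(G')| + 1$, I would substitute $|V(G')| = |V(G)|+1$ and $|E(G')| = |E(G)|+3$ to obtain $c' = c + 3 - 1 = c+2$. Part (2) is symmetric: $|V(G')|=|V(G)|-1$ and $|E(G')|=|E(G)|-3$, giving $c' = c - 3 + 1 = c-2$. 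Equivalently, part (2) follows directly from part (1) by interchanging the roles of $G$ and $G'$ since the two transformations are inverses.

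There is essentially no obstacle here; the only subtlety worth flagging is that the formula $c = |E|-|V|+1$ needs connectedness of both $G$ and $G'$, which is exactly what Proposition~\ref{prop connectivity} supplies. No case analysis on where the triangle or wye sits inside $G$ is needed, since the cyclomatic number depends only on the underlying Euler characteristic.
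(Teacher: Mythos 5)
Your proposal is correct and matches the paper's own proof essentially verbatim: both compute $c = m - n + 1$, track the net change of $+1$ vertex and $+3$ edges under a $\Delta$-YY transformation to get $c' = c + 2$, and obtain part (2) by interchanging the roles of $G$ and $G'$. Your additional remark that connectedness of $G'$ is supplied by the connectivity-preservation proposition is a reasonable point of care that the paper leaves implicit.
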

\begin{proof}
Let $m$ be the number of edges and $n$ be the number of vertices in $G$. Perform a $\Delta$-YY transformation to get a new graph $G'$. Recall that the cyclomatic number $c$ is given by $m-n+1$. Let $m'$ be the number of edges in $G'$ and $n'$ be the number of vertices in $G'$. We know that a $\Delta$-YY transformation adds one vertex, deletes 3 edges, and adds 6 edges, for a net total of one additional vertex and 3 additional edges. So by Euler's formula $c'=m'-n'+1=(m+3)-(n+1)+1=m-n+3=c+2$. 

The second item follows from the first by reversing the roles of $G$ and $G'$.  We stated both for the convenience of the reader.
\end{proof}

\begin{prop}\label{prop cyclomatic simple}
Let $G$ be a connected graph such that $G^S$ is a graph with cyclomatic number $c$ and let $G'$ be the graph obtained by performing a $\Delta$-YY operation on $G$, where $G'^S$ has cyclomatic number $c'$. Let $u$, $v$, and $w$ be the vertices of the delta or the external vertices of the wye that we plan to transform. Let $i$ be the number of connections between $u$, $v$, and $w$ in $G$ after we delete the edges of the delta or wye that we plan to transform. 
\begin{enumerate} 
    \item If a $\Delta$-YY operation is performed, then $c'=c-(1-i)$
    \item If a YY-$\Delta$ transformation is performed, then $c'=c+(1-i)$
\end{enumerate}
\end{prop}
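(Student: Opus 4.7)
The plan is to apply Euler's formula $c = m - n + 1$ to the simple graphs $G^S$ and $G'^S$, both of which are connected by the connectivity-preservation result established earlier, and to carefully bookkeep how vertex and edge counts change under simplification. The essential new subtlety, compared with the preceding proposition on the cyclomatic number of $G$ itself, is that $G^S$ records only which pairs of vertices are adjacent, not how many edges join them; the parameter $i$ is introduced precisely to detect this.

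For item 1, write $n, m$ for the vertex and edge counts of $G^S$, and suppose $G'$ is obtained by a $\Delta$-YY on the triangle $u, v, w$ with new center $x$. Then $n' = n + 1$. For the edges, $G'^S$ contains the three new single edges $ux, vx, wx$, contributing $+3$. Each pair in $\{\{u,v\}, \{u,w\}, \{v,w\}\}$ is adjacent in $G^S$ because the triangle exists in $G$, but is adjacent in $G'^S$ exactly when some non-triangle edge joined that pair in $G$; by the definition of $i$ there are $i$ such pairs. Hence $m' = m - 3 + 3 + i = m + i$, and
\[
c' = (m+i) - (n+1) + 1 = c + (i-1) = c - (1-i).
\]

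For item 2 I would argue by reversal. If $G'$ arises from $G$ via a YY-$\Delta$, then $G$ arises from $G'$ via a $\Delta$-YY, and the parameter $i$ is the same from either side: the connections between $u, v, w$ in $G$ after deleting the wye edges are precisely the connections between $u, v, w$ in $G'$ after deleting the triangle edges, because the only edges among $\{u,v,w\}$ that are altered by the operation are the triangle edges themselves (the wye edges are all $x$-incident). Applying item 1 to this reverse $\Delta$-YY then yields $c = c' - (1-i)$, i.e.\ $c' = c + (1-i)$, as claimed.

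The main obstacle is the bookkeeping just described: one must verify that $i$ simultaneously captures the pairs whose $G^S$-edge survives into $G'^S$ under a $\Delta$-YY (only those with a spare connection beyond the triangle), and, via the reversal, the pairs already adjacent in $G^S$ outside the wye structure under a YY-$\Delta$. Once this identification is made, the rest of the argument is a one-line application of Euler's formula in each case.
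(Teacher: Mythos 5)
Your proposal is correct and follows essentially the same route as the paper's proof: apply Euler's formula to the simplified graphs, observe that the operation adds one vertex and changes the connection count by exactly $i$ (three new connections to $x$ minus the $3-i$ connections among $u,v,w$ that disappear), and obtain item 2 from item 1 by reversal. Your extra remark justifying that $i$ is the same quantity from either side of the reversal is a welcome bit of care that the paper leaves implicit.
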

\begin{proof}
By Euler's formula the dimension of the cycle space of $G^S$ is equal to $m-n+1$
where $m$ is the number of connections in $G$ and $n$ is the number of vertices. Let $m'$ be the number of connections in $G'$ and $n'$ be the number of vertices in $G'$. In the $\Delta$-YY transformation, we replace edges $\{uv, uw, vw\}$ with edges $\{ux, ux, vx, vx, wx, wx\}$. This transformation always adds 3 connections, one between $u$ and $x$, one between $v$ and $x$, and one between $w$ and $x$. This transformation always deletes 3 edges. When we remove these 3 edges, we have $i$ connections left between these vertices. So this transformation removes $3-i$ connections. Thus, $m'=m+3-(3-i)=m+i$. We notice that in all cases the $\Delta$-YY transformation adds one vertex, so $n'=n+1$. Then we can see that while $c=m-n+1\iff c-1=m-n$,
\begin{align*}
    c'&=m'-n'+1\\
    &=(m+i)-(n+1)+1\\
    &=m-n+i\\
    &=c-1+i
\end{align*}
Therefore, $c'=c-(1-i)$. 

The second item follows from the first by reversing the roles of $G$ and $G'$.

\end{proof}

\section{$\Delta$-YY Equivalence Classes of 6-regular Graphs}\label{sec equiv classes}

As with the original $\Delta$-Y operation, we say that two graphs $G$ and $H$ are \textit{$\Delta$-YY equivalent} if there exists a series of $\Delta$-YY operations that take $G$ to $H$. We often shorten this notation to say $G$ and $H$ are \textit{equivalent}. An \textit{equivalence class} of graphs is a set of graphs that are all equivalent to each other. The \textit{equivalence class of $G$} is the unique equivalence class containing $G$. 

From now on we assume our graphs are 6-regular as this is the case of primary interest for quantum field theory.

\subsection{Add-ons and excluded subgraph lemmas}\label{sec add ons}

Our first goal is to determine when these 6-regular equivalence classes are finite and when they are infinite. To do so, we must start by defining a set of excluded subgraphs. 

Our set of excluded subgraphs can be given by a combination of four base shapes and the selection of an add-on. To combine an add-on with a base shape, we must define an \textit{addition vertex} in the add-on and one or more addition vertices in the base shape. We define the addition of a graph $G$ and add-on $A$ to be the gluing of both graphs at an addition vertex. 

The add-ons can be defined as follows. An add-on is a chain of length $n\geq 0$ where each element of the chain is a 4-cycle with two adjacent connections of size 1 and two adjacent connections of size 2, and where the vertex incident to both connections of size 2 may be incident to another identical shape, continuing $n$ times and finishing with a double edge or a delta. The vertex of degree 2 incident to exactly two connections of size 1 in the first link of the chain is the addition vertex.  When $n=0$ the add-on is just a double edge or a delta and the addition vertex can be chosen arbitrarily. Figure \ref{fig:add-ons} shows some examples. The top left vertex of each graph in Figure \ref{fig:add-ons} is the addition vertex. In Figure \ref{fig:add-ons}, we can see the following add-ons:
\begin{enumerate}[a)]
    \item A chain of length 0, ending in a double edge.
    \item A chain of length 0, ending in a delta. 
    \item A chain of length 1, ending in a double edge
    \item A chain of arbitrary length, ending in a double edge. 
    \item A chain of arbitrary length, ending in a delta. 
\end{enumerate}
\begin{figure}[H]
    \centering
    \includegraphics[width=8cm]{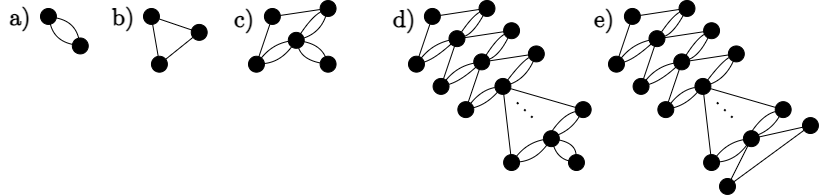}
    \caption{Examples of add-ons}
    \label{fig:add-ons}
\end{figure}

Now we can define the excluded subgraphs as the following set of graphs. Figure \ref{fig:excl_sub} has been provided for clarity, note that it does not include all instances of using add-ons. 
\begin{enumerate}
    \item A 3-cycle where at least one of the connections has size greater than 1.
    \item A 4-cycle where at least three connections of the cycle have size greater than 1 and one of the vertices incident to two connections of size greater than 1 is an addition vertex to which we add an add-on. 
    \item A 5-cycle where all connections have size greater than 1 and at least 2 non-adjacent vertices are addition vertices to which we add add-ons. 
    \item A 4-cycle, where three connections of the cycle have size greater than 1, the fourth connection of the cycle is a connection of size 1 and is contained in a triangle, and the vertices incident to the connection of size 1 are addition vertices to which we add add-ons. 
\end{enumerate}
\begin{figure}[H]
    \centering
    \includegraphics[width=12cm]{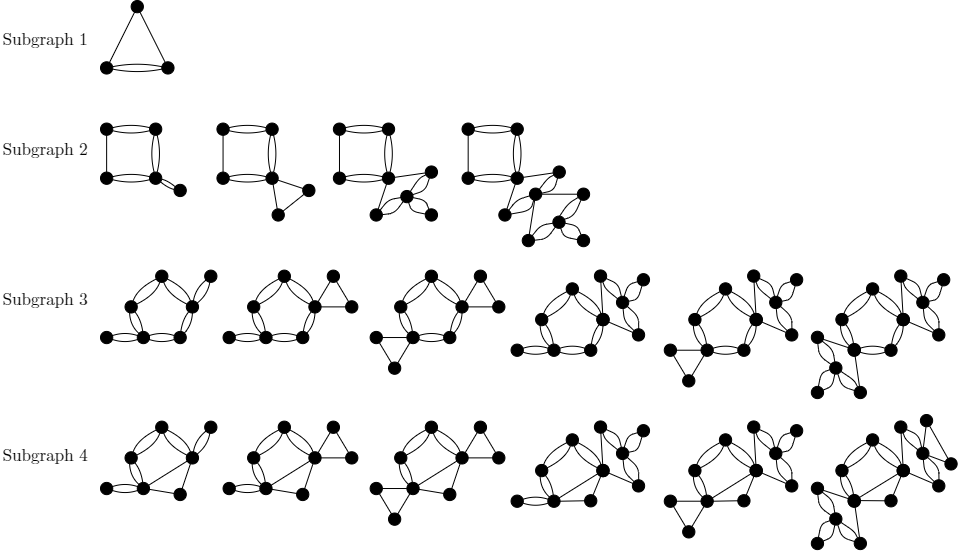}
    \caption{Some Excluded Subgraphs}
    \label{fig:excl_sub}
\end{figure}

Note that after adding the add-on, the addition vertex has degree 6, so when the excluded subgraph appears as a subgraph of a 6-regular graph then the addition vertex has no neighbours outside the excluded subgraph.  The same holds for the addition vertices of each link of the chain making up the add-on.

Given an add-on $A$ added to a base graph and all together appearing as a subgraph of a 6-regular graph, by a \emph{possible $\Delta$-YY operation on the add-on $A$}, we mean any $\Delta$-YY operation that acts on at least one edge of the add-on $A$, does not involve any edges of the base graph, but may involve any number of edges in the rest of any 6-regular graph.

\begin{lemma}\label{lem add on trans}
With set up and notation as above, any possible $\Delta$-YY operation on $A$ results in a graph containing an add-on to the same base graph or a graph containing subgraph 1.  
\end{lemma}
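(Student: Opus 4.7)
The plan is a careful enumeration of the $\Delta$-YY operations available on the add-on $A$. Two structural observations guide the analysis. First, the addition vertex of each link has all of its external edges in the base graph, so no permitted operation can involve these vertices in a way that creates or consumes a triangle or double-edge incidence via base-graph edges. Second, the triangles inside $A$ occur only in a terminal delta, and the vertices of the chain with three double-edge connections in $A$ occur only at the continuation vertex of the last link when that chain terminates in a double edge. Everywhere else, any potential triangle or YY configuration either requires edges of the base graph (forbidden) or forces an edge of $A$ together with an edge from the rest of the graph to close a 3-cycle with a multi-edge, i.e., subgraph 1.

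The first case is a $\Delta$-YY transformation. The only candidate triangle entirely inside $A$ is a terminal delta, because the links are 4-cycles and the operation cannot borrow base-graph edges. If the triangle uses an edge of $A$ together with edges outside $A$, the endpoints of that $A$-edge must already have the needed incidences externally; I would argue that this either forces an addition vertex to participate (which would require base-graph edges and is forbidden) or produces a pre-existing 3-cycle with a multi-edge, hence subgraph 1. In the remaining case, performing $\Delta$-YY on the terminal delta replaces it by a new 6-valent vertex $x$ double-joined to the three former delta vertices; inspection of the connection sizes at the continuation vertex of the last link and at the other two former delta vertices shows that the resulting structure either fits the add-on definition with the chain effectively extended by one link and a new terminal shape, or produces a 3-cycle with a multi-edge and therefore subgraph 1.

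The second case is a YY-$\Delta$ transformation. I would enumerate candidates for the 6-valent center $x$ of the YY whose operation touches $A$. When $x$ is the continuation vertex of the last link and the chain terminates in a double edge, YY-$\Delta$ shortens the chain by one link and yields a terminal delta, so the output is again an add-on with the same base graph. When $x$ is any other vertex of $A$, it does not have three double edges within $A$, so any YY-$\Delta$ would require a third double connection in the rest of the graph; I would show that the triangle created by YY-$\Delta$ must then close through an existing edge of $A$, producing a 3-cycle whose connections are not all of size 1, hence subgraph 1. When $x$ lies outside $A$ but one of its double-edge neighbors is in $A$, the structural observations above force the same conclusion.

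The main obstacle is the bookkeeping. Each case requires confirming the sizes of the connections at several vertices after the operation and showing either that the resulting subgraph matches the add-on definition exactly (chain length, addition vertex, and termination shape) or that a multi-edge has become incident to a 3-cycle. The challenge is organizational rather than conceptual: the 6-regularity constraint combined with the fact that all addition vertices have their remaining incidences locked into the base graph is restrictive enough that most operations on $A$ are simply impossible, and the remaining branches admit direct verification once the vertices involved are named carefully.
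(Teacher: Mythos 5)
Your overall strategy (exhaustive enumeration of the possible operations, using $6$-regularity and the locked-in addition vertices to kill most cases) matches the paper's, and you handle correctly the two operations that live entirely inside $A$ (the $\Delta$-YY on a terminal delta and the YY-$\Delta$ centred at the continuation vertex of the last link). The gap is in your treatment of the ``mixed'' operations, those using at least one edge of $A$ together with edges from the rest of the graph. You claim every such operation either forces base-graph edges into play or produces subgraph~1. That dichotomy is false, and the counterexamples are not corner cases: they are exactly the transitions that make the lemma's conclusion a disjunction and that Lemma~\ref{lem reduce chain} later exploits. Concretely: (i) the outer connection of a terminal delta has both endpoints of degree $2$ in $A$, so a $\Delta$-YY on that edge together with an apex outside $A$ is a legitimate operation involving no base-graph edges; it creates no multi-edged triangle, and instead converts the terminal delta into a new $4$-cycle link followed by a double edge, i.e.\ an add-on one link longer. (ii) The two adjacent size-$1$ connections of any link meet at that link's addition vertex, and a $\Delta$-YY using them plus a ``diagonal'' edge elsewhere in the graph again uses no base-graph edges (your premise that participation of an addition vertex forces base-graph edges is wrong --- for the first link its other four edges are in the base graph but are not touched, and for later links its other edges lie inside $A$); the result is the truncated chain ending in a double edge. (iii) A YY-$\Delta$ centred at the outer vertex of a terminal double edge, with its other two double connections outside $A$, produces a terminal delta, not subgraph~1.

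In each of these three branches your argument, as written, reaches a contradiction with reality: neither of your two claimed outcomes occurs, so you cannot conclude. The fix is to recognize that the outcome in these branches is the first alternative of the lemma (``contains an add-on to the same base graph''), with the chain length or terminal shape changed, rather than the second. Your organizing principle by operation type is fine, but the case split the paper uses --- by how many of the transformed connections lie in $A$ (one, two, or all) --- makes these surviving mixed cases visible rather than hiding them inside an overbroad impossibility claim.
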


\begin{proof}
We will begin by considering all possible $\Delta$-YY operations that could be performed. We will look at possible transformations on a connection of size 1 of $A$, a connection of size 2 of $A$, two adjacent connections of $A$, deltas in $A$, and wyes in $A$. This analysis will thus cover all possible $\Delta$-YY operations on $A$. 

Suppose only one of the connections we are transforming is in $A$, and we are doing a $\Delta$-YY transformation.  In this case both vertices of the connection in $A$ in the graph must have degree less than 6 in the excluded subgraph in order for the other delta edges to be outside $A$.  The only way for there to be such a connection in $A$ is if $A$ ends with a delta and the connection in question is the one of the final delta which does not have a vertex in the rest of the add-on.  Performing this $\Delta$-YY transformation converts $A$ into an add-on of length one more ending with a double edge and without changing how the add-on is added to the base graph.  

Suppose only one of the connections we are transforming is in $A$ and we are doing a YY-$\Delta$ transformation.  Then the connection in $A$ is of size $2$ and at least one of its vertices must have degree less than $6$ in the excluded subgraph in order for the other wye edges to be outside $A$.  The only way for there to be such a connection in $A$ is if $A$ ends with a double edge and that double edge is the connection in question.  Performing this YY-$\Delta$ transformation converts $A$ into an add-on of the same length but ending with a delta instead of a wye and without changing how the add-on is added to the base graph.

Next suppose exactly two of the connections we are transforming are in $A$.  These two connections must share a vertex.  If we are performing a YY-$\Delta$ transformation, then we must have two connections of size 2 in $A$ which share a vertex that does not have degree 6 in the excluded subgraph (so that the third connection of the wye is not in the excluded subgraph), but no such pair of connections exists in an add-on since the shared vertex between two adjacent connections of size 2 always has degree 6 in the add on. If we are performing a $\Delta$-YY transformation and at least one of the connections has size 2 or more, then performing the transformation gives a graph containing subgraph 1.  Suppose, then, that we are performing a $\Delta$-YY transformation with two adjacent connections of size 1 in $A$ and the third edge not in $A$ and not doubling an edge in $A$.  If the two edges in $A$ are in a final triangle then the third doubles an edge of $A$, so the two must be part of one of the square links forming the add-on and the third edge then is the diagonal of this square that does not involve the addition vertices.  Performing this $\Delta$-YY transformation gives a chain formed of all the links of $A$ from before the transformed link and ending in a double edge.  This is again an add-on added to the base graph in the same way as $A$ was.

Suppose we are transforming a delta that is fully in $A$.  The only possible delta in $A$ is a final triangle, and performing a $\Delta$-YY transformation on this delta converts $A$ into an add-on of the same length but ending with a double edge and without changing how the add-on is added to the base graph.

Finally, suppose we are transforming a wye that is fully in $A$.  The only possible wye in $A$ is the one formed by a final double edge and the two double edges of the previous link, which necessarily exists.  Performing this YY-$\Delta$ transformation converts $A$ into an add-on of length one less and ending with a triangle, without changing how the add-on is added to the base graph.

This covers all cases and so proves the lemma.

\end{proof}

We have now considered every case of transforming these add-ons, and have shown that under every possible transformation, they will always be equivalent to another graph containing an add-on or subgraph 1. Next we have a quick, but extremely useful lemma.

\begin{lemma}\label{lem reduce chain}
Every add-on is $\Delta$-YY equivalent to the chain of length 0 ending in a double edge. 
\end{lemma}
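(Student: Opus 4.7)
The plan is to induct on the chain length $n$, using the specific transformations already analysed in Lemma~\ref{lem add on trans} to strictly decrease the length. I will use two ``peeling'' moves. The first is a $\Delta$-YY on the final delta of a length-$n$ delta-ending chain, which produces a length-$n$ double-edge-ending chain: the two non-linking former triangle vertices, now attached only by double edges to the newly introduced $6$-valent vertex, become external to the re-identified add-on. The second is a YY-$\Delta$ on the wye formed by the last linking vertex $b_n$ together with its two size-$2$ neighbours in the last link and the far endpoint $s$ of the final double edge; this produces a length-$(n-1)$ delta-ending chain, with $s$ becoming external. Alternating the two moves drives the length down to zero.

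For the base case $n=0$ I would verify both endings directly: a length-$0$ double-edge add-on is already the target, and a length-$0$ delta add-on (a triangle through the addition vertex $a$) reduces to the target in a single $\Delta$-YY on the triangle, with the newly created $6$-valent vertex $x$ playing the role of the far endpoint of the target double edge $ax$ and the two former non-addition triangle vertices becoming external. For the inductive step ($n \geq 1$), I would apply the first peeling move if the chain ends in a delta, then the second peeling move to produce a length-$(n-1)$ add-on, and finally invoke the induction hypothesis to finish.

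The only point that requires genuine care is to confirm, after each peeling move, that what remains really is an add-on of the asserted length and ending, attached at the original addition vertex of the base graph. This amounts to re-identifying which vertices and edges now belong to the add-on (versus becoming external to it) and checking that degrees still sum to $6$ at every vertex of the ambient $6$-regular graph. This bookkeeping is already implicit in the proof of Lemma~\ref{lem add on trans}, where the corresponding internal operations were shown to yield add-ons (with these specific re-identifications), so I do not anticipate any substantive obstacle beyond it; the whole reduction requires at most $2n+1$ transformations.
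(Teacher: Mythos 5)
Your proof is correct and takes essentially the same approach as the paper's: the paper likewise reduces the chain by alternating $\Delta$-YY transformations on the terminal delta and YY-$\Delta$ transformations on the terminal wye, working from the end of the chain toward the addition vertex. Your version merely packages this as an explicit induction on the length and is more careful about the re-identification bookkeeping, which is consistent with the case analysis in Lemma~\ref{lem add on trans}.
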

\begin{proof}
Consider a chain ending in a delta, we can apply a series of alternating $\Delta$-YY and YY-$\Delta$ transformations to the chain, starting from the delta at the end of the chain and moving towards the addition vertex, to reduce to the chain of length 0 ending in a double edge. Now consider a chain ending in a double edge, if the chain has length 0, we are done. Otherwise, we can apply a series of alternating YY-$\Delta$ and $\Delta$-YY transformations to the chain, starting from the wye at the end of the chain and moving towards the addition vertex, to reduce to the chain of length 0 ending in a double edge. 
\end{proof}
This lemma allows us to reduce to the case where the chain of length 0 ending in a double edge is the only add-on when we prove that all the excluded subgraphs are equivalent to each other under every possible transformation. In this case, every possible transformation refers to all transformations that involve at least one edge of the base shape in the excluded subgraph, and any number of edges in the surrounding 6-regular graph. We do not consider transformations involving only the add-on and the surrounding graph, as we did that in Lemmas 3.1 and 3.2 to show that they are all equivalent to each other and can be reduced to the chain of length 0 ending in a double edge.

\begin{lemma}\label{lem excluded}
If a graph $G$ contains one of the excluded subgraphs, then all graphs that it is $\Delta$-YY equivalent to contain an excluded subgraph. 
\end{lemma}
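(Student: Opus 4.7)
The plan is to reduce to showing that a single $\Delta$-YY operation on $G$ preserves the property of containing an excluded subgraph; the full statement then follows by induction on the length of the sequence of operations connecting $G$ to its equivalent graph. So fix $G$ containing an excluded subgraph $H$, and let $G'$ be obtained from $G$ by one $\Delta$-YY operation. If the transformation acts on edges entirely outside $H$, then $H$ persists in $G'$ and we are done. Otherwise, the transformation involves at least one edge of $H$, and we split into two regimes: either the transformation involves only edges of the add-on portion of $H$ (plus possibly edges in the surrounding graph), or it involves at least one edge of the base shape of $H$.

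The first regime is handled directly by Lemma~\ref{lem add on trans}: any such transformation produces a graph containing either an add-on glued to the same base shape (yielding an excluded subgraph of the same type as $H$) or a triangle with a connection of size greater than $1$ (excluded subgraph~1). For the second regime, I would invoke Lemma~\ref{lem reduce chain} to reduce to the case in which every add-on attached to the base shape is the trivial chain of length $0$ ending in a double edge. Indeed, since $\Delta$-YY equivalence is transitive, we may freely replace $H$'s add-ons by the trivial one before analyzing the given operation; after verifying that the operation on this simplified version yields an excluded subgraph, the original $G'$ will contain an excluded subgraph as well because the add-ons it actually carries are equivalent to the trivial one by add-on-only operations.

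With the reduction in place, I would carry out a case analysis over the four base shapes:
\begin{enumerate}
    \item[(i)] For subgraph~1 (triangle with a connection of size $>1$): the only $\Delta$-YY operations touching the triangle are a YY-$\Delta$ that collapses one side, a $\Delta$-YY on the triangle itself, or a $\Delta$-YY on an adjacent triangle. In each case I would trace how the resulting local configuration is a 4-cycle matching subgraph~2, 4, or again subgraph~1.
    \item[(ii)] For subgraph~2 (4-cycle with three heavy connections plus add-on): the operations to consider are a $\Delta$-YY on one of the two triangles formed by the three doubled edges, and a YY-$\Delta$ at a vertex incident to two doubled connections. These respectively turn the 4-cycle into a 5-cycle matching subgraph~3 or into a 3-cycle matching subgraph~1.
    \item[(iii)] For subgraph~3 (5-cycle of heavy connections with two non-adjacent add-ons): any operation involving a base edge reduces the 5-cycle to a 4-cycle of the form in subgraph~2 or~4, with the add-ons still supplying the vertex-degree requirement.
    \item[(iv)] For subgraph~4 (4-cycle with a chord): similar local moves either produce a triangle of type~1 or a doubled 4-cycle of type~2.
\end{enumerate}
In every case one checks that the addition vertices of the new excluded subgraph again have degree $6$ inside the subgraph, so the add-ons continue to have no external attachments.

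The main obstacle will be the sheer bookkeeping of the case analysis in step~(ii)--(iv): distinguishing which edges of the base cycle are transformed, tracking how the chord/doubled edges redistribute, and verifying that the add-on vertices continue to meet the degree requirement after the move. I expect each individual subcase to be a short local picture-chase, but getting the enumeration complete and non-redundant (using the symmetry of each base shape to cut down cases) is where most of the care will go.
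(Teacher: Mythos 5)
Your overall architecture matches the paper's: reduce to a single $\Delta$-YY operation and induct, dismiss operations disjoint from the excluded subgraph, invoke Lemma~\ref{lem add on trans} for operations confined to an add-on, and case-analyse operations that touch the base shape. The genuine gap is in your use of Lemma~\ref{lem reduce chain}. The operation carrying $G$ to $G'$ acts on specific edges of $G$; once you replace the add-ons of $H$ by trivial ones to obtain some $G_0$, that operation need not correspond to any operation on $G_0$ at all (the transformed delta or wye can use an add-on edge together with a base edge, and those add-on edges are gone in $G_0$). Worse, your justification for transferring the conclusion back to $G'$ --- that $G'$ contains an excluded subgraph ``because the add-ons it actually carries are equivalent to the trivial one by add-on-only operations'' --- is circular: the assertion that $\Delta$-YY equivalence preserves the property of containing an excluded subgraph is exactly Lemma~\ref{lem excluded}, the statement under proof. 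A non-circular repair would have to exhibit the particular excluded subgraph produced in the simplified picture and check that undoing the add-on reductions converts its trivial add-ons back into valid longer add-ons on the same base; the paper sidesteps all of this by carrying general add-ons through the case analysis directly.

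A secondary issue is that your case analysis (i)--(iv), organized by base shape, omits the one uniform case that does most of the work: any $\Delta$-YY transformation whose delta uses an edge from a connection of size at least $2$ leaves behind a parallel edge and hence a triangle with a multiple edge, i.e.\ subgraph~1, regardless of which base shape is involved. The paper instead organizes the cases by operation type and by how many of the transformed connections lie in the base, so this observation disposes of most situations immediately and only a handful of single-edge and wye cases remain. Several of your predicted outcomes are wrong as a consequence: for instance, the three doubled connections of the $4$-cycle in subgraph~2 do not form two triangles, and the moves actually available there are a $\Delta$-YY through its connection of size~$1$ (yielding subgraph~3) or a YY-$\Delta$ centred so that two of its doubled connections lie in the base (yielding subgraph~1), not a $5$-cycle obtained from a ``triangle of doubled edges.'' The plan is salvageable, but both the reduction step and the enumeration need to be redone along the lines above.
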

\begin{proof}
If there is an excluded subgraph which the $\Delta$-YY operation does not involve any edge of, then the transformed graph still contains this excluded subgraph.  Thus we need only consider $\Delta$-YY operations which do use at least one edge from an excluded subgraph.
By Lemma~\ref{lem add on trans}, we only need to consider $\Delta$-YY operations that involve at least one edge of the base graph.

We will cover the possibilities in the same order as in the proof of Lemma~\ref{lem add on trans}.

Suppose only one of the connections we are transforming is in the base graph and we are doing a $\Delta$-YY transformation.  If the connection in the base graph has size 2 or more, then performing the transformation results in subgraph 1 which is an excluded subgraph.  Suppose then, the connection in the base graph has size 1.  Both vertices of this connection must have degree less than 6 in the excluded subgraph and so in particularly cannot be addition vertices.  Only subgraph 1 and subgraph 2 have such connections.  Performing the $\Delta$-YY on this connection in subgraph 1 results in a subgraph 2 with the same add-on, and performing it in subgraph 2 results in a subgraph 3 with the original add-on as one of its add-ons and a double edge as the other add on.

Suppose only one of the connections we are transforming is in the base graph and we are doing a YY-$\Delta$ transformation.  Then at least one of the vertices of the connection in the base graph must have degree 2 in the base graph in order for the other wye edges to be outside the base graph, but no such vertices exist in any base graph, so this case is impossible. 

Next suppose exactly two of the connections we are transforming are in the base graph.  These two connections must share a vertex.  If we are performing a $\Delta$-YY transformation and at least one of the connections has size 2 or more, then performing the transformation gives a graph containing subgraph 1.  If we are performing a $\Delta$-YY transformation with two connections of size 1 from the base graph, then these two edges are part of a triangle in the base graph and so the third edge of the delta parallels an edge of the base graph and hence is also a multiple edge. Suppose we are performing a YY-$\Delta$ transformation.  Then we must have two connections of size 2 in the base graph which share a vertex that does not have degree 6 in the base graph so that the third connection of the wye is not in the base graph.  No such pair of connections exists in a subgraph 1, but they can occur in the other base graphs.  There are two ways a pair of connections of the type we need can occur in a base graph: when the third double edge of the wye is the double edge of an add-on of size 1 ending with a double edge, or when the third double edge of the wye is not in the excluded subgraph.  In either situation, performing a YY-$\Delta$ on subgraph 2 or subgraph 4 results in a subgraph 1 and performing it on subgraph 3 results in a subgraph 2 with one of the add-ons from the subgraph 3 becoming the add-on for the subgraph 2.   

Suppose we are transforming a delta that is fully in the base graph.  Then either we are transforming a delta of subgraph 1 which results in another subgraph 1, or we are transforming the delta of subgraph 4, resulting in a subgraph 3 with the same add-ons.

Finally, there are no wyes that are fully in the base graph. 

This covers all cases and so proves the lemma.

\end{proof}

\subsection{Finite and infinite $\Delta$-YY equivalence classes}\label{sec finite infinite}

Now we want to use these lemmas to characterize finite and infinite equivalence classes of 6-regular graphs under $\Delta$-YY operations. 

\begin{theorem}\label{thm infinite}
Any 6-regular graph containing one of the excluded subgraphs is an element of an infinite $\Delta$-YY equivalence class. 
\end{theorem}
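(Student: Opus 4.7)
The plan is to show that any $\Delta$-YY equivalence class containing an excluded subgraph contains graphs of arbitrarily many vertices, hence is infinite. Since Lemma~\ref{lem excluded} already guarantees that every graph equivalent to $G$ contains some excluded subgraph, it suffices to exhibit infinitely many non-isomorphic graphs in the equivalence class $[G]$. The mechanism is Lemma~\ref{lem reduce chain}, executed in reverse: each $\Delta$-YY transformation is inverted by a YY-$\Delta$ transformation and vice versa, so the alternating sequence that collapses an arbitrary add-on chain to the length-$0$ double edge can be run backwards to grow a length-$0$ double edge into a chain of any prescribed length $n$. Each additional link in the chain introduces additional vertices, so the graphs $G_0,G_1,G_2,\dots$ obtained this way are pairwise non-isomorphic and $[G]$ is infinite.

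For an excluded subgraph $H$ of type $2$, $3$, or $4$, which already carries an add-on $A$, this chain-growth is applied directly: the reversed moves of Lemma~\ref{lem reduce chain} act only on edges of the add-on and its addition vertices, so they extend to $\Delta$-YY operations on the full host $6$-regular graph, leaving everything outside $A$ untouched. For an $H$ of type $1$ (a triangle with at least one multi-edge), which has no add-on to grow, I would first perform a single $\Delta$-YY transformation on its triangle; as observed inside the proof of Lemma~\ref{lem excluded}, this produces a graph containing a subgraph $2$ (with the trivial length-$0$ add-on), and the problem reduces to the previous case.

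The one technical point requiring care is the legality of each reversed step inside the host graph: a reversed YY-$\Delta$ move requires a genuine triangle, and a reversed $\Delta$-YY move requires a genuine wye whose centre has degree exactly~$6$. Both conditions are automatic, because each intermediate add-on appearing along Lemma~\ref{lem reduce chain}'s reduction sequence is built from identical $4$-cycle links whose internal structure supplies the required triangles and wyes, and $6$-regularity is preserved at every step (in fact, the chain-growth is purely local to the add-on). I do not expect an essential obstacle here; the substance of the argument is already packaged in Lemmas~\ref{lem add on trans}, \ref{lem reduce chain}, and~\ref{lem excluded}, and the new content is just the monotonicity of vertex count in chain length, which rules out any collapse that might have prevented the sequence from giving infinitely many distinct isomorphism types.
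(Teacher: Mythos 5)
Your growth mechanism for subgraphs 2, 3, and 4 --- running the reduction of Lemma~\ref{lem reduce chain} backwards to inflate the add-on chain, and using the strictly increasing vertex count of the chains to produce infinitely many isomorphism types --- is a legitimate alternative to the paper's argument, which goes in the opposite direction: it first uses Lemma~\ref{lem reduce chain} to normalize every add-on to the length-$0$ double edge, then collapses each base shape with one or two YY-$\Delta$ moves (plus one preliminary $\Delta$-YY for subgraph 4) down to a $3$-cycle containing a multiedge, and only then grows the graph without bound. Your version trades that explicit case analysis for the reversibility of the chain reduction; for types 2--4 both routes work, granted the locality check you correctly flag (the wyes used are centred at chain vertices already saturated inside the add-on).

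The gap is in your subgraph 1 case. Performing a $\Delta$-YY transformation on the triangle of a $3$-cycle $u,v,w$ in which, say, the connection $uv$ has size at least $2$ does \emph{not} produce a subgraph 2: it removes one edge from each of the three connections and introduces a new vertex $x$ doubly adjacent to $u$, $v$, $w$, so what you obtain is the $3$-cycle $u,x,v$ with connections $ux$ and $xv$ of size $2$ and $uv$ of size at least $1$ --- that is, another subgraph 1 on one more vertex, not a $4$-cycle carrying an add-on. The step of Lemma~\ref{lem excluded} you invoke converts subgraph 1 into subgraph 2 only when the delta being transformed consists of a size-$1$ connection of the triangle together with two edges lying \emph{outside} the excluded subgraph, and nothing guarantees the host graph contains such a delta (the triangle's edges need not lie in any other triangle). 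Fortunately the computation just described is itself the repair, and it is exactly the engine of the paper's proof: a $\Delta$-YY on a multi-edged $3$-cycle always yields a multi-edged $3$-cycle on one more vertex, so iterating from any subgraph 1 directly produces graphs of unbounded size in the equivalence class. Replace your type-1 paragraph with that induction and the proof closes.
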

\begin{proof}
By Lemma~\ref{lem reduce chain} it suffices to consider graphs with one of the excluded subgraphs where the add-ons are all of length 0 ending in a double edge, since any other graph with an add-on is equivalent to one of these.

Notice that the existence of a $3$-cycle with at least one multiedge in a graph results in an infinite equivalence class by induction, since for any graph on $k$ vertices with a $3$-cycle with at least one multiedge, we can perform a $\Delta$-YY transformation on this $3$-cycle to get a graph on $k+1$ vertices with another such $3$-cycle. We may hence continue to apply $\Delta$-YY operations on this variety of 3-cycles an arbitrary number of times to get a graph of arbitrarily large size. 

Consider a graph $G$ containing subgraph 1. Subgraph 1 by definition is a 3-cycle with at least one multiedge, and thus $G$ is an element of an infinite equivalence class. 

Next, consider a graph $G$ containing subgraph 2. Note that since we are only taking add-ons of length 0 ending with a double edge, subgraph 2 contains a wye, one whose centre is a vertex of a four cycle. If we perform a YY-$\Delta$ transformation on this wye, we get a 3-cycle with at least one multiedge, and thus $G$ is an element of an infinite equivalence class. 

Now, consider a graph $G$ containing subgraph 3. Because of our assumption on the add-ons, subgraph 3 contains two wyes that are centred at two non-adjacent vertices of the 5-cycle. Since these centre vertices are not adjacent, we can perform a YY-$\Delta$ transformation on both of these wyes. Since a YY-$\Delta$ transformation deletes the centre vertex of the wye, this results in a 3-cycle with at least one multiedge (the edge of the original 5-cycle that did not get transformed in either of our YY-$\Delta$ transformations). Thus, $G$ is an element of an infinite equivalence class.

Finally, consider a graph $G$ containing subgraph 4. Begin by performing a $\Delta$-YY transformation on the delta in the subgraph. Now we see that our graph contains subgraph 3, which we have just shown can be reduced to a 3-cycle with at least one multiedge. Hence, $G$ is an element of an infinite equivalence class.

Therefore, any 6-regular graph containing one of the excluded subgraphs is an element of an infinite equivalence class. 
\end{proof}

\begin{theorem}\label{thm finite}
All connected 6-regular graphs that do not contain an excluded subgraph are in finite $\Delta$-YY equivalence classes. 
\end{theorem}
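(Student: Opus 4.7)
The plan is to use the contrapositive of Lemma~\ref{lem excluded} to conclude that no graph in the equivalence class of $G$ contains any of the four excluded subgraphs, and then extract invariants that bound the vertex count. The key consequence of avoiding subgraph 1 is twofold: whenever a $\Delta$-YY is applied to a triangle, the three sides must be genuine size-1 connections (else the triangle itself is subgraph 1); and whenever a YY-$\Delta$ is applied to a wye, the three external vertices $u,v,w$ must be pairwise non-adjacent in the ambient graph (else together with the wye center and its double edges they form a triangle containing a multi-edge, which is again subgraph 1).

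With these two structural facts, the effect of either operation on the multiplicity profile $(c_1,c_2,c_3,\dots)$, where $c_k$ counts the connections of size exactly $k$, becomes entirely explicit. A $\Delta$-YY removes three size-1 connections (the triangle) and creates three new size-2 connections at the new vertex $x$, so it sends $(c_1,c_2)\mapsto(c_1-3,c_2+3)$ and leaves every $c_k$ with $k\geq 3$ unchanged; a YY-$\Delta$ is the reverse. Hence $C:=c_1+c_2$ and each $c_k$ with $k\geq 3$ are invariants of the whole equivalence class. This is consistent with Proposition~\ref{prop cyclomatic simple} in the case $i=0$, which already forces $|E(G^S)|=\sum_k c_k$ to be invariant.

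The final step converts these invariants into a bound on $n:=|V(G)|$. By 6-regularity, $3n=c_1+2c_2+\sum_{k\geq 3}kc_k=C+c_2+\sum_{k\geq 3}kc_k$, and the nonnegativity constraint $c_2\leq C$ then gives $3n\leq 2C+\sum_{k\geq 3}kc_k$, a bound depending only on invariants of the class. Since only finitely many 6-regular graphs exist on any bounded number of vertices, the equivalence class is finite. The only point needing careful attention is verifying that every $\Delta$-YY and YY-$\Delta$ one could ever apply inside the class falls into the ``good'' case analyzed above; this is just a matter of propagating the no-excluded-subgraph property via Lemma~\ref{lem excluded} at each step, so the argument is bookkeeping rather than new graph-theoretic content.
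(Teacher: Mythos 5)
Your proof is correct, but it takes a genuinely different route from the paper's. The paper first reduces to the case where $G$ is triangle-free (by transforming away all single-edged triangles), then runs a proof by contradiction on a hypothetical sequence of moves reaching a graph on more vertices: a surplus of $\Delta$-YY over YY-$\Delta$ moves would require a ``new delta'' not produced directly by a YY-$\Delta$, and a case analysis shows any such new delta must share an edge with the delta just created by a YY-$\Delta$, so at most one of the two can ever be transformed and no net growth is possible. You instead extract numerical invariants of the whole class: once Lemma~\ref{lem excluded} guarantees every graph in the class avoids subgraph~1, every transformed triangle has all sides of size $1$ and every transformed wye has pairwise non-adjacent external vertices, so each move sends $(c_1,c_2)\mapsto(c_1\mp 3,c_2\pm 3)$ and fixes $c_k$ for $k\geq 3$; hence $C=c_1+c_2$ and the higher $c_k$ are class invariants and $3n=C+c_2+\sum_{k\geq 3}kc_k\leq 2C+\sum_{k\geq 3}kc_k$ bounds the vertex count. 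Your argument is shorter, gives an explicit quantitative bound on the size of graphs in the class, and as a byproduct shows that the total number of connections is invariant on any class avoiding the excluded subgraphs, which generalizes Proposition~\ref{prop m edges} beyond classes containing a simple graph (and is consistent with the $i=0$ case of Proposition~\ref{prop cyclomatic simple}, as you note). The paper's move-by-move analysis, on the other hand, localizes exactly where an attempt to grow the graph fails, which is the kind of structural information used elsewhere in Section~\ref{sec add ons}. Both arguments rely on the same external input, namely Lemma~\ref{lem excluded} propagating the absence of excluded subgraphs across the class, and your closing remark that this propagation is the only point needing care is accurate.
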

\begin{proof}
Assume that $G$ is a connected 6-regular graph that does not have one of the subgraphs listed above. First we claim that without loss of generality, we can assume that $G$ has no triangles.

Proof of Claim: All triangles must be single-edged triangles by our assumptions. Furthermore, if we perform a $\Delta$-YY transformation on a single-edged triangle, we only add connections of size 2 to the graph, so this operation cannot create new single-edged triangles. Finally, our graph is finite, so it has a finite number of triangles within it. Thus, we can perform $\Delta$-YY operations on triangles in the graph, until there are no more, to get an equivalent graph without triangles, that also has none of the excluded subgraphs, by Lemma~\ref{lem excluded}. 

\medskip

So assume that $G$ has no triangles, or any of the excluded subgraphs. Say $G$ has $n$ vertices. Assume for contradiction that $G$ is in an infinite equivalence class. Since there are finitely many 6-regular graphs with any fixed number of vertices, $G$ must be equivalent to a graph on more than $n$ vertices, and since the $\Delta$-YY operations change the number of vertices by 1, it must be equivalent to a graph on $n+1$ vertices. Since $G$ has no triangles, we cannot get to this equivalent graph in only one transformation, since our only possible first transformation is a YY-$\Delta$ transformation that reduces the number of vertices of $G$. 
Thus we need to make at least one more $\Delta$-YY transformation than YY-$\Delta$ transformation starting with a YY-$\Delta$ transformation to find an equivalent graph on more vertices. 

If we only do $\Delta$-YY transformations on deltas which were created directly as the delta of a YY-$\Delta$ transformation then every $\Delta$-YY transformation would require a corresponding YY-$\Delta$ transformation and so we could not have more $\Delta$-YY transformations than YY-$\Delta$ transformations.  Thus there must be at least one delta formed not directly as the delta of a YY-$\Delta$ transformation, but through some other combination of transformations.  Call such a delta a \emph{new delta}.

All edges of a new delta are connections of size 1 as otherwise the graph at that step contains a subgraph 1 and hence by Lemma~\ref{lem excluded} $G$ also contained an excluded subgraph.  Consider the graph at the step before a new delta was formed.  In order to form the new delta a transformation took place that created a connection of size 1.  It cannot have come from a $\Delta$-YY transformation because this transformation only creates connections of size 2, not connections of size 1. Thus, whenever we have a connection of size 1, it must come from a YY-$\Delta$ transformation.  So in the step immediately before the creation of the new delta we have the subgraph on the left in Figure~\ref{fig steps} where none of the connections can have larger size than indicated, and the new delta itself shares an edge with the delta created by the transformation as illustrated on the right in Figure~\ref{fig steps} where again none of the connections can have larger size than illustrated.

\begin{figure}
    \centering
    \includegraphics{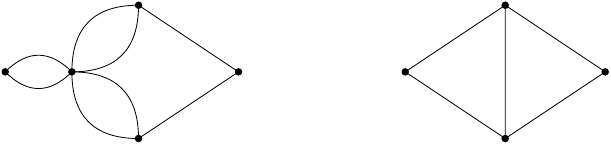}
    \caption{The subgraph just before and at the creation of the new delta.}
    \label{fig steps}
\end{figure}

However, since these two triangles share an edge, we can perform a $\Delta$-YY transformation on at most one of them in the entire sequence of transformations.  Thus this new delta does not allow for more $\Delta$-YY transformations than YY-$\Delta$ transformations.  This holds for any new delta, contradicting that $G$ is in an infinite equivalence class.

\end{proof}

\subsection{Doubled 3-regular graph families and their Feynman periods}\label{sec doubled 3 reg}

In Section~\ref{sec: Feynman Periods} we talked about our motivations for this research, $\phi^3$ scalar field theory, its Feynman graphs, and their associated Feynman periods. In that section, we noted that these Feynman graphs are 3-regular.  To apply the $\Delta$-YY operation to a Feynman graph in $\phi^3$ and obtain a period invariant, we must double all of its edges to get a 6-regular graph. In this section, we will look at the equivalence classes of these doubled 3-regular graphs, and see what they can tell us about the Feynman period.

To begin, we will look at which graphs are in infinite equivalence classes and which are not. 

\begin{cor}\label{cor finite}
A 3-regular graph will be in a finite $\Delta$-YY equivalence class once we double its edges if and only if its girth is greater than or equal to 6. 
\end{cor}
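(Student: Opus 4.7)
The plan is to reduce the claim to the structural characterization provided by Theorems~\ref{thm infinite} and~\ref{thm finite}: a connected 6-regular graph lies in a finite $\Delta$-YY equivalence class if and only if it contains no excluded subgraph. Letting $G^{(2)}$ denote the doubled graph, the goal then becomes showing that $G^{(2)}$ contains one of the excluded subgraphs if and only if the girth of $G$ is at most $5$. The key fact to exploit throughout is that, since $G$ is simple and 3-regular, every connection of $G^{(2)}$ has size exactly $2$, the simplified graph $(G^{(2)})^S$ equals $G$, and every vertex of $G^{(2)}$ has exactly three incident connections (each of size $2$).

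For the ``if'' direction of the corollary (girth $\geq 6$ gives a finite class), I would argue contrapositively. Each of the four excluded subgraphs contains, in its base shape, a cycle of length $3$, $4$, or $5$ in the simplified graph. Any such cycle in $G^{(2)}$ descends to a cycle of the same length in $(G^{(2)})^S=G$, which is impossible when the girth of $G$ is at least $6$. Hence $G^{(2)}$ contains no excluded subgraph and Theorem~\ref{thm finite} gives the finiteness.

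For the ``only if'' direction I would split into three cases according to the girth $g(G)\in\{3,4,5\}$ and exhibit an excluded subgraph in $G^{(2)}$ in each. If $g(G)=3$, a triangle of $G$ lifts to a 3-cycle in $G^{(2)}$ with all three connections of size~$2$, which is subgraph~1. If $g(G)=4$, take a $4$-cycle $C$ of $G$; each vertex of $C$ has exactly one neighbour in $G$ outside $C$ (since $G$ is 3-regular), and doubling turns that lone outgoing edge into a connection of size $2$. So in $G^{(2)}$, $C$ becomes a 4-cycle with all four connections of size $2$, and any vertex of $C$ is an addition vertex carrying the chain-of-length-$0$ add-on ending in a double edge; this is subgraph~2. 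The case $g(G)=5$ is analogous: the 5-cycle in $G^{(2)}$ has all connections of size $2$, and every vertex carries the same add-on, so in particular any two non-adjacent vertices do, giving subgraph~3.

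I do not expect a real obstacle here; the main thing to be careful about is checking the add-on condition in the girth~$4$ and $5$ cases, where one must verify that the single remaining edge at each cycle vertex of $G$ genuinely provides a valid length-$0$ add-on ending in a double edge, and that this extra edge is not already accounted for within the cycle itself (which is ensured by the fact that consecutive edges of a cycle in a simple graph have distinct non-shared endpoints, so the remaining edge at a vertex leaves the cycle). Subgraph~4 never occurs in $G^{(2)}$ because it requires a connection of size $1$, which $G^{(2)}$ lacks, so it plays no role. This completes the proposed proof.
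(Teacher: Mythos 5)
Your proposal is correct and follows essentially the same route as the paper: both directions reduce to Theorems~\ref{thm infinite} and \ref{thm finite}, with the girth $3$, $4$, $5$ cases producing excluded subgraphs $1$, $2$, $3$ respectively via the fact that every vertex of the doubled graph is the centre of a wye, and the girth $\geq 6$ direction following because every excluded subgraph has a short cycle in the simplified graph. Your extra care about chords in the girth-$4$ and girth-$5$ cases is a slight refinement of detail the paper glosses over, but it is the same argument.
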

\begin{proof}
We will first prove that a 3-regular graph with girth less than 6 will be in an infinite $\Delta$-YY equivalence class once we double its edges. 

Start by noting that when considering simple graphs, which our 3-regular graphs are, there are no graphs of girth 0, 1, or 2. Thus we only need to consider graphs of girth 3, 4, and 5. In the beginning of Section~\ref{sec add ons} we saw a list of excluded subgraphs that, by Theorem~\ref{thm infinite} result in infinite equivalence classes. We will show that each graph of girth 3, 4, and 5 contains one of these subgraphs when its edges are doubled. Start with a graph of girth 3; it must have a 3-cycle.  When we double its edges, we get a 3-cycle with only connections of size 2, this contains subgraph 1. Thus, all graphs of girth 3 are in infinite equivalence classes. Next consider a graph of girth 4; it must have a 4-cycle.  When we double its edges we get a 4-cycle with only connections of size 2.  Note also that the graph was 3 regular, so when we double its edges, every vertex becomes the centre of a wye. Thus, after doubling it must contain subgraph 2 and is hence in an infinite equivalence class. Finally, consider a graph of girth 5. This graph must have a 5-cycle, when we double its edges we get a 5-cycle with only connections of size 2, where every vertex is a wye. Furthermore, this 5-cycle must have at least 2 non-adjacent vertices because for every vertex $v$ in the cycle, there are 4 other vertices in the cycle and $v$ only has 3 neighbours, so there exists at least one vertex in the cycle that it is not adjacent to. Hence, every graph of girth 5 contains subgraph 3 after doubling and thus is in an infinite equivalence class. 
Therefore, every 3-regular with girth less than 6 will be in an infinite equivalence class once we double its edges.  

\medskip
We will next prove that a 3-regular graph with girth greater than or equal to 6 will be in a finite $\Delta$-YY equivalence class once we double its edges, which will complete the proof of the corollary. 

In Theorem~\ref{thm finite} we show that all 6-regular graphs that do not contain one of the excluded subgraphs are in finite equivalence classes. We notice that all of the listed subgraphs have girth strictly less than 6 even when ignoring cycles from multiple edges, so when we double the edges of a 3-regular graph with girth greater than or equal to 6, it is impossible for it to contain one of the listed subgraphs. Therefore all 3-regular graphs with girth greater than or equal to 6 will be in finite equivalence classes once we double their edges. 
\end{proof}

Call the graphs with the minimum number of vertices in an equivalence class \textit{minimal graphs}. A \textit{minimal graph of $G$} is a minimal graph in the equivalence class of $G$. In a given equivalence class, call the minimal graphs with the minimum number of connections \textit{minimum graphs}. A \textit{minimum graph of $G$} is the minimum graph in the equivalence class of $G$.

Borinsky and Schnetz calculated the periods for all $\phi^3$ graphs up to and including loop order 6, many at loop orders 7, 8, and 9, and some particular ones at higher loop orders, all computed by graphical functions.  These periods were gratiously provided to us by Borinsky and Schnetz and will appear along with further computations in \cite{BSphi3}.  All the periods they calculated can be expressed as linear combinations of products of multiple zeta values.  

Generally speaking the complexity of the graph determines the complexity of the period, though proven rigorous statements to that effect are only known in some special cases.  In Borinsky and Schnetz' calculations, the lowest loop order $\phi^3$ graphs (that are not already determined due to small edge cuts or small vertex cuts), give the period $1$, then we see the period $\zeta(3)-\frac{1}{3}$ appearing, followed by more complicated expressions such as $-\frac{10}{3}\zeta(5)+\frac{10}{3}\zeta(3)+\frac{1}{3}$ and later expressions involving products of zeta values, for example $-3\zeta(3)^2 + 4\zeta(3)$, and eventually multiple zeta values that are not single zeta values.  Observe that the expressions are not homogeneous of weight and may include constant terms.

As the graphs get to have larger loop order, and otherwise more complex, we see new terms which can appear in the linear combination, first $\zeta(3)$, then $\zeta(5)$, $\zeta(7)$, $\zeta(3)^2$ and so on.  However, sometimes graphs have simpler periods that their size alone would suggest.  This is similar to what was observed by Broadhurst and Kreimer in the '90s concerning $\phi^4$ graphs \cite{bkphi4} and a systematic understanding is lacking now as then.  Emprically, the minimum graph in the $\Delta$-YY equivalence class, gives us some information on which period terms can appear.  This is quite reasonable as the period is a $\Delta$-YY invariant \cite{borinsky2021graphical}, and so if a large graph has a minimum graph with few vertices or few connections in its family, then the large graph is effectively much simpler than it appears.

In Table 1, we can see a table that gives us the minimum number of vertices and connections required in a minimum graph of $G$ for certain terms to appear in the period of a $\phi^3$ Feynman graph $G^S$, where $G$ is $G^S$ with all edges doubled.  This can be compared to the $\phi^4$ results of \cite{bkphi4} and \cite{Sphi4}.

\begin{table}[h]
    \centering
    \begin{tabular}{|m{1.6cm}|m{1.8cm}|m{2.1cm}|l|}
     \hline
     minimum loop number & minimum number of vertices & minimum number of connections &new period terms  \\
    \hline
     1&3 & 3& 1\\\hline
     3&5&10&\(\zeta(3)\)\\ \hline
     4&6&12& \(\zeta(5)\)\\\hline
     5&7&14&\(\zeta(7)\)\\\hline
     5&7&16&\(\zeta(3)^2\)\\\hline
     6&7&17&\(\zeta(5)\zeta(3)\)\\\hline
     6&7&19&12\(\zeta(5,3)-29\zeta(8)\)\\\hline
     6&8&16&\(\zeta(9)\)\\\hline
     6&8&17&\(\zeta(3)^3\)\\\hline
     7&9&18&\(\zeta(11)\)\\\hline
     7&9&19&5\(\zeta(6)\zeta(5)-6\zeta(4)\zeta(7)-90\zeta(2)\zeta(9)-2\zeta(5,3,3),\) \\
     &&&\(\zeta(5)^2, \zeta(3)^2\zeta(5), \zeta(7)\zeta(3)\)\\\hline
     7&9&21&94\(\zeta(7,3)-793\zeta(10), \zeta(3)\zeta(5,3), \zeta(3)\zeta(8)\)\\
     \hline
    \end{tabular}
    \caption{Minimal Conditions for New Period Terms}
    \label{tab:table 1}
\end{table}

In Table~\ref{tab:table 1}, we see that some of the period terms are listed as linear combinations of zeta values, rather than a single term. These seem to be the most convenient linear combinations by which to organize the new terms which appear.  Not all linear combinations of multiple zeta values can appear as $\phi^3$ periods, and the best basis for the purposes of writing them is not clear to us.\footnote{The study of relations between multiple zeta values is rich and conjecturally fairly well understood, though interesting questions remain.  It is an important part of the number theoretic study of multiple zeta values.}  Note also the absence of single even zetas, at least so far, similarly to the situation with $\phi^4$ periods \cite{Sphi4}.

The data in Table~\ref{tab:table 1} has been gathered up to loop 7 Feynman graphs.  We expect similar patterns to continue to hold at higher loop orders.
We also note that it is not always easy or convenient to find the minimum graph of a given graph $G$. To that end, we provide the following proposition and conjecture which should aid in providing quick, though slightly less specific, information about period terms. 

\begin{prop}\label{prop bipartite small}
A bipartite 6-regular graph $G$ on $n$ vertices obtained by doubling the edges of a 3-regular graph has minimal graphs on $\leq \frac{n}{2}$ vertices.
\end{prop}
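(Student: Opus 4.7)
The plan is to exhibit an element of the $\Delta$-YY equivalence class of $G$ on $n/2$ vertices by applying a YY-$\Delta$ transformation at every vertex on one side of the bipartition. Write $H$ for the 3-regular graph whose doubling is $G$, and let $A \cup B$ be its bipartition; counting edges of $H$ from each side gives $3|A| = |E(H)| = 3|B|$, so $|A| = |B| = n/2$. Assuming $H$ is simple, each vertex $v \in A$ has three distinct neighbours in $B$, and in $G$ its six incident edges are exactly the three doubled edges to those neighbours with no other incidences, so by the definition in Section~\ref{subsec graph theory} the vertex $v$ is the centre of a wye and a YY-$\Delta$ may be applied at it.

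The crux of the plan is the observation that these YY-$\Delta$ operations may be applied one after another through all of $A$ without interfering with one another. A YY-$\Delta$ at $v \in A$ deletes $v$ together with its six incident edges and inserts three single edges joining pairs of former neighbours of $v$, all of which lie in $B$. Because $A$ is independent in $G$, no edge incident to any other vertex $v' \in A$ is touched by either the deletions or the insertions; consequently every remaining $v' \in A$ still has its six edges forming three doubled edges to three distinct vertices in $B$ and is still the centre of a wye. Iterating through all of $A$ removes $|A| = n/2$ vertices and produces a 6-regular multigraph on the $n/2$ vertices of $B$ inside the $\Delta$-YY equivalence class of $G$, which is what is needed.

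The only point that demands care — and the one I would flag as the principal obstacle in any attempt to loosen the hypotheses — is the requirement that each $v \in A$ actually have three distinct neighbours, so that YY-$\Delta$ is available at it. This is automatic when $H$ is simple, but if $H$ is allowed to contain multi-edges then a vertex of $A$ incident to a doubled edge of $H$ would have only two distinct neighbours in $G$ and would fail to be a wye centre, so either an auxiliary reduction or an additional hypothesis would be needed to cover that case.
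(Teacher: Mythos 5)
Your proof is correct and follows essentially the same route as the paper: both arguments take a bipartition class (an independent set of size $n/2$, each of whose vertices is a wye centre in the doubled graph) and perform a YY-$\Delta$ transformation at every vertex of that class, deleting $n/2$ vertices. You are somewhat more explicit than the paper about why the successive transformations do not interfere with one another and about the implicit assumption that the underlying 3-regular graph is simple, which is a reasonable reading of the statement.
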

\begin{proof}
Consider the largest independent set of $G$, this is the same as the largest independent set of $G^S$. Since $G^S$ is 3-regular and bipartite, each class of the bipartition must be of equal size. Since each class is by definition an independent set, the largest independent set of $G$ is of size at least $\frac{n}{2}$. Every vertex of $G$ is the centre of a wye, so we can perform the YY-$\Delta$ transformation on every vertex in an independent set. Each YY-$\Delta$ transformation deletes one vertex. So a minimal graph is of size $\leq n-\frac{n}{2}=\frac{n}{2}$.
\end{proof}

\begin{conjecture}\label{conj not bipartite}
A non-bipartite 6-regular graph $G$ on $n$ vertices obtained by doubling the edges of a 3-regular graph has minimal graphs on $\geq \frac{n}{2}+1$ vertices. 
\end{conjecture}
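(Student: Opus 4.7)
The plan is to bound the difference $b - a$ along any reduction sequence $2H = G_0 \to G_1 \to \cdots \to G_m$, where $a$ counts the $\Delta$-YY operations and $b$ the YY-$\Delta$ operations; since each move changes $|V|$ by $\pm 1$, we have $|V(G_m)| = n + a - b$, so the conjecture is equivalent to showing $b - a \leq n/2 - 1$ whenever $H$ is non-bipartite.

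A natural first attempt is the following. Each vertex of $H$ is the centre of a wye in $2H$, and doing YY-$\Delta$ at $v \in V(H)$ leaves every non-neighbour of $v$ in $H$ still wye-centred. Hence YY-$\Delta$ operations can be performed consecutively on every vertex of any independent set $I \subseteq V(H)$, giving $b - a = |I| \leq \alpha(H)$. A standard argument (if $I$ is independent in a 3-regular $H$ with $|I| = n/2$, counting edges out of $I$ shows $V(H) \setminus I$ must also be independent, so $H$ is bipartite) yields $\alpha(H) \leq n/2 - 1$ for non-bipartite 3-regular $H$. Thus direct strategies already satisfy $b - a \leq n/2 - 1$.

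The main difficulty is that indirect strategies --- which mix in $\Delta$-YY operations to restore wye structures and enable further YY-$\Delta$ operations --- can exceed $\alpha(H)$. Indeed, starting from the bipartite $2Q_3$ with $n = 8$, one can reach the $3$-vertex triple-edged triangle via six YY-$\Delta$ and one $\Delta$-YY, giving $b - a = 5 > \alpha(Q_3) = 4$. So the proof of the conjecture must exploit non-bipartiteness of $H$ in a more delicate way than the bare independent-set bound. I would proceed by classifying vertices of each intermediate $G_i$ as \emph{originals} (in $V(H)$) or \emph{created} (introduced by an earlier $\Delta$-YY), tracking the set $S \subseteq V(H)$ of originals destroyed along the sequence, and attempting to maintain an invariant of roughly the form $(b_i - a_i) + (\text{parity correction depending on } H[S]) \leq \alpha(H) + (\text{extra bonus when }H\text{ is bipartite})$. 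The bonus must accommodate the genuine over-reduction seen in the $2Q_3$ example (whose 4-cycles are essential), while forcing any sequence on a non-bipartite $H$ to stop at $n/2 - 1$.

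The hard part will be constructing the correct invariant and proving its preservation across all operation types. A key technical obstacle is that $\Delta$-YY operations can create vertices that are later destroyed: in the $2Q_3$ example above the created vertex $y$ is eventually removed by a YY-$\Delta$, and its transient existence is exactly what enables over-reduction. A cleaner approach would be to find a purely graph-theoretic invariant on 6-regular graphs --- perhaps $\mathbb{Z}/2$-valued and cohomological in flavour, or arising from perfect-matching decompositions of the underlying simple graph (which behave differently for bipartite vs.\ non-bipartite cubic graphs) --- that is preserved by $\Delta$-YY moves and that reads off bipartiteness of any doubled pre-image. Either carrying out the combinatorial tracking above or finding such a shortcut appears to require genuinely new ideas, which is presumably why the statement remains conjectural in the paper.
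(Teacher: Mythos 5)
This statement is an open conjecture in the paper: the authors give no proof, only the empirical observation that no counterexample appears up to loop order $7$ together with the remark that a reduction below $\frac{n}{2}+1$ vertices ``certainly cannot be found by exclusively performing YY-$\Delta$ transformations, as there is not a large enough independent set.'' Your proposal correctly reformulates the claim as the inequality $b-a\leq \frac{n}{2}-1$ over all reduction sequences, and your second paragraph is a correct and slightly more careful version of the paper's own heuristic: performing YY-$\Delta$ on an independent set $I$ of $H$ gives $b-a=|I|$, and the edge-counting argument showing $\alpha(H)\leq\frac{n}{2}-1$ for non-bipartite cubic $H$ is sound. Your $2Q_3$ example is also essentially right and is a useful caution (indeed $2Q_3$ reduces through $2K_4$ to the triple-edged triangle, so $b-a=5>\alpha(Q_3)=4$); note that this over-reduction can even be achieved by YY-$\Delta$ moves alone, acting on wyes created by earlier moves rather than on an independent set of original vertices, so the paper's parenthetical justification is itself looser than it sounds.

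However, the proposal does not prove the conjecture, and you say as much. The genuine gap is precisely where you locate it: the entire argument rests on an invariant of the form $(b_i-a_i)+(\text{correction})\leq\alpha(H)+(\text{bipartite bonus})$ that is never constructed, and no candidate for the ``correction'' or the ``bonus'' is proposed, let alone shown to be preserved by all five case-types of $\Delta$-YY and YY-$\Delta$ moves (the same case analysis the paper needs in Lemmas~\ref{lem add on trans} and~\ref{lem excluded}). The alternative suggestion of a $\mathbb{Z}/2$-valued or matching-theoretic invariant is likewise only a direction. Since the paper offers no proof to compare against, the honest assessment is that your write-up is a reasonable framing of the problem and of the obstruction to the naive independent-set bound, but it establishes only the (already known) statement that direct independent-set strategies respect the bound; the conjecture itself remains open after your attempt exactly as it does in the paper.
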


In examples up to loop 7, a non-bipartite graph has not yet been observed to have a minimal graph of size less than $\frac{n}{2}+1$, and one certainly cannot be found by exclusively performing YY-delta transformations, as there is not a large enough independent set.

\medskip

One of the most notable examples of minimal graphs giving information on the period of a graph is what we call the triple-edged triangle equivalence class. This is the class of graphs that are equivalent to a 3-cycle where every connection has size 3. The graphs in this class for which $G^S$ is a 3-regular graph all have period 1 since the triple-edged triangle has period 1.  It would be interesting to find an example of a graph with period 1 which is not from this class or to prove that no such graph can exist, and to find a more structural understanding of the equivalence class of the triple-edged triangle. These graphs up to loop 7 can be seen in Figure \ref{fig:period1}.
\begin{figure}
    \centering
    \includegraphics[width=12cm]{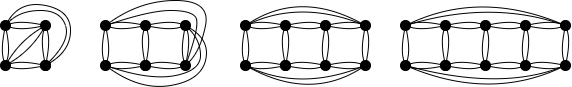}
    \caption{Graphs with Period 1}
    \label{fig:period1}
\end{figure}
It would be interesting to consider defining $\Delta$-YY reducibility as equivalence to this triple-edged triangle graph. Unfortunately, we have been unable to characterize this equivalence class, its graphs have similar and interesting properties, but we have yet to find a property which characterizes the class. 

Another property that we observed in all equivalence classes containing a double-edged graph which we have computed is that connections of size 1 always occur in cycles where each connection in the cycle is a connection of size 1. 
For some intuition, consider a double-edged graph. Performing one YY-$\Delta$ transformation gives three connections of size 1 contained in the same 3-cycle, so the property holds. From here, any YY-$\Delta$ transformation either creates another such 3-cycle, or adds one edge to an existing connection of size 1. In this second case, let our existing cycle be the triangle on vertices $\{a, b, c\}$. The new YY-$\Delta$ transformation creates a new triangle on vertices $\{b, c, d\}$. The new edge $bc$ removes one connection of size 1 from our existing cycle, while the remaining new edges $bd$ and $cd$ give us two new connections of size 1, creating a 4-cycle $\{a, b, d, c, a\}$. Similarly, performing a $\Delta$-YY transformation on the double-edged graph will create the same cycles of connections of size 1. One can proceed inductively to see that the observation is logical. 
As expected from quantum field theory we observe no connections of size greater than 2 apart from the triple-edged triangle, since such graphs do not have a convergent period.

One 3-regular Feynman graph of interest is the Mobius-Kantor graph shown in Figure \ref{fig:721MK}.  This graph is $7,21$ in the Borinsky, Schnetz list \cite{BSphi3}.
\begin{figure}[h]
    \centering
    \includegraphics[width=5cm]{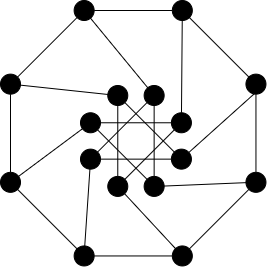}
    \caption{Graph 7,21 (drawn with simple edges): Mobius-Kantor Graph}
    \label{fig:721MK}
\end{figure}
This is a graph of girth 6, and thus belongs to a finite equivalence class. This equivalence class has a unique maximal element, which is the double-edged Mobius-Kantor graph, and a unique minimal element, which is a simple graph shown in Figure \ref{fig:721min}. 
\begin{figure}[h]
    \centering
    \includegraphics[width=4cm]{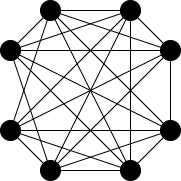}
    \caption{The minimum graph of the Mobius-Kantor graph's equivalence class}
    \label{fig:721min}
\end{figure}
Simple graphs do not appear in any infinite equivalence classes, which most of the $\phi^3$ Feynman graphs belong to, making them another interesting area of study, which we will look at in the next section. 

\subsection{Simple Graph Families}\label{sec simple}

For the graph theorists 
who deal mostly in simple graphs, and to further investigate the structure of finite equivalence classes, we also look specifically at the equivalence classes of simple 6-regular graphs. These equivalence classes have many nice properties that do not occur in the more general case. We begin with the following result.

\begin{cor}\label{cor simple finite}
All simple 6-regular graphs are in finite $\Delta$-YY equivalence classes.
\end{cor}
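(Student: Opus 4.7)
The plan is to reduce the corollary directly to Theorem~\ref{thm finite} by showing that a simple 6-regular graph cannot contain any of the four excluded subgraphs listed in Section~\ref{sec add ons}.

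First I would inspect the definitions of the excluded subgraphs one at a time. Subgraph 1 is by definition a 3-cycle with at least one connection of size greater than 1. Subgraphs 2, 3, and 4 each require at least three, five, and three (respectively) connections whose size is greater than 1. In every case the subgraph contains a multiedge. Since a simple graph has every connection of size exactly 1, no simple graph can contain any of the excluded subgraphs as a subgraph. (The add-ons attached in cases 2--4 are irrelevant here, because the base cycle already forces a multiedge.)

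Having ruled out the excluded subgraphs, I would apply Theorem~\ref{thm finite} directly in the connected case: every connected simple 6-regular graph $G$ is a connected 6-regular graph containing no excluded subgraph, so its $\Delta$-YY equivalence class is finite. To handle a disconnected simple 6-regular graph $G$, I would note that each $\Delta$-YY operation acts within a single connected component, so the equivalence class of $G = G_1 \sqcup \cdots \sqcup G_k$ is the set of disjoint unions $H_1 \sqcup \cdots \sqcup H_k$ with $H_i$ in the equivalence class of $G_i$. Each $G_i$ is a connected simple 6-regular graph and therefore has a finite equivalence class by the previous paragraph, so the total equivalence class is a product of finitely many finite sets and hence finite.

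There is no substantial obstacle here; the work is entirely in the observation that being simple forbids the multi-edge features built into every excluded subgraph. The only subtle bookkeeping is the disconnected case, which is handled by the locality of the $\Delta$-YY operations on connected components.
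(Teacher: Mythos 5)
Your proof is correct and takes essentially the same route as the paper: every excluded subgraph contains a connection of size at least $2$, so a simple graph contains none of them, and Theorem~\ref{thm finite} then gives finiteness. Your explicit handling of the disconnected case via the locality of the $\Delta$-YY operations on components is a small point of extra care that the paper's one-line proof does not spell out, but it does not change the substance of the argument.
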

\begin{proof}
This follows easily from Theorem~\ref{thm finite}. We see that all the excluded subgraphs have at least one connection of size 2. Since a simple graph has no multiedges, it cannot contain any of the excluded subgraphs. Therefore, all simple 6-regular graphs are in finite equivalence classes. 
\end{proof}

The fact that these classes are finite makes them much easier to study. Many of the nice properties below seem to be more related to the finiteness of the class, rather than the existence of a simple graph in the class and thus it is possible that they could be extended to all finite classes. We do not attempt this in this paper. 

\begin{prop}\label{prop m edges}
Let $G$ be a simple 6-regular graph with $m$ edges. Then any graph in the $\Delta$-YY equivalence class of $G$ has $m$ connections. 
\end{prop}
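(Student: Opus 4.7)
The plan is to combine Lemma~\ref{lem excluded}, which forbids excluded subgraphs from appearing anywhere in the equivalence class of a simple graph, with the local connection-count bookkeeping implicit in Proposition~\ref{prop cyclomatic simple}. Since every one of the excluded subgraphs contains at least one connection of size greater than one, a simple graph $G$ contains no excluded subgraph; hence by Lemma~\ref{lem excluded} no graph in its equivalence class does either. In particular, subgraph~1 never appears in the class, so every 3-cycle in every graph in the class consists entirely of connections of size one.

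Next, I would fix an arbitrary $H$ in the class and examine any single $\Delta$-YY operation taking $H$ to $H'$. In the notation of Proposition~\ref{prop cyclomatic simple}, let $\{u,v,w\}$ be the triangle or external-wye vertices and let $i$ be the number of connections between these three vertices that remain in the graph after the three delta or wye-centre edges are removed. The proof of Proposition~\ref{prop cyclomatic simple} shows that the number of connections changes by $+i$ under a $\Delta$-YY transformation and by $-i$ under a YY-$\Delta$ transformation, so it is enough to verify that $i=0$ for every operation allowed within the class.

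For a $\Delta$-YY on a triangle $\{u,v,w\}$ in $H$: any extra edge between two of $u,v,w$ would turn a triangle edge into a connection of size at least two, producing subgraph~1 inside $H$, which contradicts the conclusion of the first paragraph. Hence $i=0$. For a YY-$\Delta$ on a wye centered at $x$ with external vertices $\{u,v,w\}$: if an edge between, say, $u$ and $v$ survived after the wye edges were removed, then in $H$ itself the three vertices $\{u,x,v\}$ already formed a 3-cycle with connections $ux$ and $xv$ of size two, which is again subgraph~1. So $i=0$ here as well.

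Combining these observations, every step in any sequence of $\Delta$-YY operations starting at $G$ preserves the number of connections, and induction on the length of the sequence gives that every graph in the equivalence class has $m$ connections. The one subtlety I want to flag is the YY-$\Delta$ case: the potential offending edge between external vertices of the wye must be recognized as already forcing a triangle-with-multi-edge in $H$ itself through the centre $x$, rather than only being visible after the transformation; once this is seen, Lemma~\ref{lem excluded} closes the argument immediately.
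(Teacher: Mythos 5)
Your proof is correct and follows essentially the same route as the paper's: both arguments reduce to showing that the parameter $i$ from the proof of Proposition~\ref{prop cyclomatic simple} vanishes for every operation in the class, using the absence of excluded subgraphs (in particular multi-edged triangles), and you correctly identify the key point in the YY-$\Delta$ case that an edge between external vertices of a wye already creates a multi-edged triangle through the centre in $H$ itself. The only cosmetic difference is that you derive "no excluded subgraphs anywhere in the class" directly from Lemma~\ref{lem excluded}, whereas the paper routes this through Corollary~\ref{cor simple finite} and finiteness; both are valid.
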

\begin{proof}
By Corollary~\ref{cor simple finite}, the equivalence class of $G$ is finite, thus its graphs cannot contain any of the excluded subgraphs listed earlier in section 3. Notably, they cannot contain any triangles with connections of size greater than 1. Let $H$ be any graph in the equivalence class of $G$ such that it does not contain any triangles with connections of size greater than 1.  Now recall the proof of Proposition~\ref{prop cyclomatic simple}, where we showed that in the general case, if $H$ has $m$ connections, performing a $\Delta$-YY transformation on a delta of $H$ with vertices $u$, $v$, and $w$ gives a new graph $H'$ with $m'=m+i$ connections, where $i$ is the number of connections between $u$, $v$, and $w$ of size greater than 1. Since $H$ is in a finite equivalence class, there are no connections between these three vertices of size greater than 1, so $i=0$. Thus $m'=m$. Similarly, in the proof of Proposition~\ref{prop cyclomatic simple}, we showed that in the general case, if $H$ has $m$ connections, performing a YY-$\Delta$ transformation on a wye with external vertices $u$, $v$, and $w$ gives a new graph $H'$ with $m'=m-i$ connections, where $i$ is the number of connections between $u$, $v$, and $w$ in $H$.  We claim that $i=0$. Assume for contradiction that $i>0$, then there must exist at least one edge between two of these three vertices, this would give us a triangle with two connections of size 2, contradicting the finiteness of the equivalence class of $H$. Therefore, $i=0$, so $m'=m$. Therefore, every graph in the equivalence class has the same number of connections. 
Since $G$ is a simple graph, it has the same number of edges as connections. Hence, if $G$ has $m$ edges, then every graph in the equivalence class of $G$ has $m$ connections. 
\end{proof}

\begin{theorem}\label{thm minimal}
Every simple 6-regular graph is a minimal element of its $\Delta$-YY equivalence class. 
\end{theorem}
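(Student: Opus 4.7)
The plan is to deduce the result directly from Proposition~\ref{prop m edges} together with $6$-regularity, with essentially no further work required. Let $G$ be a simple $6$-regular graph on $n$ vertices. By the handshake lemma, $G$ has $m = 3n$ edges, and because $G$ is simple these are also exactly $3n$ connections.

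Now let $H$ be any graph in the $\Delta$-YY equivalence class of $G$, and suppose $H$ has $n'$ vertices. Recall from the observations at the start of Section~\ref{sec props} that the $\Delta$-YY operations preserve vertex degrees, so $H$ is also $6$-regular; hence by handshake $H$ has $3n'$ edges. On the other hand, Proposition~\ref{prop m edges} tells us that $H$ has exactly $m = 3n$ connections. Since every connection contains at least one edge, the number of connections of $H$ is at most the number of edges of $H$, so
\[
    3n \;\leq\; 3n',
\]
and therefore $n \leq n'$. Thus no graph in the equivalence class of $G$ has fewer vertices than $G$, so $G$ is a minimal element of its class.

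There is no substantive obstacle here: the theorem is essentially a one-line consequence of Proposition~\ref{prop m edges} once one notes that $6$-regularity turns a bound on connections into a bound on vertex count. The only point that deserves emphasis is why $H$ is still $6$-regular, which is immediate from the degree-preserving nature of the $\Delta$-YY operations highlighted at the beginning of Section~\ref{sec props}.
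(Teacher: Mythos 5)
Your argument is correct and is essentially the same as the paper's: both rest on Proposition~\ref{prop m edges} (constancy of the connection count across the class) combined with the handshake lemma for $6$-regular graphs to bound the vertex count from below. The only cosmetic difference is that you argue directly that $n \leq n'$ for every graph in the class, whereas the paper phrases it as a contradiction with a hypothetical graph on $n-1$ vertices.
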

\begin{proof}
It suffices to prove the result for connected graphs.

Let $G$ be a connected simple 6-regular graph on $n$ vertices. Assume for contradiction that there exists a graph on $n-1$ vertices in the equivalence class of $G$, call this graph $H$. By Proposition~\ref{prop m edges}, we know that $H$ has $m$ connections, where $m$ is the number of edges in $G$. Since $G$ is 6-regular, $m=\frac{6n}{2}=3n$, but $H$ is also 6-regular, so $m\leq \frac{6(n-1)}{2}=3(n-1)$, which is a contradiction.
Therefore, every simple graph is a minimal element of its equivalence class. 

\end{proof}

This is another key fact that allows us nicely write out the following results on properties of the graphs in the equivalence class. 

\begin{prop}
Let $G$ be a connected simple 6-regular graph on $i$ vertices. Then any graph on $n$ vertices in the $\Delta$-YY equivalence class of $G$ has exactly $6i-3n$ connections of size exactly 1.
\end{prop}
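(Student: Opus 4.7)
The plan is to combine two earlier facts with a short case analysis. First, by Proposition~\ref{prop m edges} every graph in the equivalence class of $G$ has exactly $3i$ connections (since $G$ is simple $6$-regular on $i$ vertices it has $3i$ edges, hence $3i$ connections). Second, any $6$-regular graph on $n$ vertices has $\tfrac{6n}{2}=3n$ edges counted with multiplicity. Letting $c_k$ denote the number of connections of size exactly $k$ in $H$, we have
\[
\sum_{k\geq 1} c_k = 3i, \qquad \sum_{k\geq 1} k\, c_k = 3n.
\]
If we can show that in fact $c_k=0$ for all $k\geq 3$, then $c_1+c_2=3i$ and $c_1+2c_2=3n$, so $c_2=3n-3i$ and $c_1=6i-3n$, which is the claim.

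The remaining work is therefore to prove that no graph $H$ in the equivalence class of $G$ contains a connection of size $\geq 3$. The plan is a short induction on the length of a $\Delta$-YY sequence from $G$ to $H$ (any graph in the class is reachable by such a sequence by definition). The base case is immediate, as $G$ is simple. For the inductive step, suppose $H$ has only connections of size $1$ and $2$, and let $H'$ come from $H$ by a single $\Delta$-YY operation.

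If the operation is a $\Delta$-YY transformation, then the triangle in $H$ being acted on has all three edges of size $1$ (otherwise $H$ would contain excluded subgraph~1, contradicting Corollary~\ref{cor simple finite} together with Lemma~\ref{lem excluded}); the three new connections incident to the freshly introduced vertex $x$ all have size exactly $2$, and no other connection is altered, so $H'$ still has only connections of size $1$ or $2$. If the operation is a YY-$\Delta$ transformation with external vertices $u,v,w$, then for a connection of size $\geq 3$ to appear in $H'$ the pre-existing connection between two of $u,v,w$ (say between $u$ and $v$) in $H$ minus the wye would have to already have size $\geq 2$; but then in $H'$ the triangle $u,v,w$ has $uv$ of size $\geq 3$, which is excluded subgraph~1, again contradicting Corollary~\ref{cor simple finite} and Lemma~\ref{lem excluded}. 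Hence $H'$ also has only connections of size $1$ and $2$, completing the induction.

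The only real checking here is the YY-$\Delta$ case, and even there the needed observation is essentially the one already used inside the proof of Proposition~\ref{prop m edges}; once bounding the connection sizes is done, the count $c_1=6i-3n$ is immediate arithmetic.
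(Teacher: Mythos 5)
Your proof is correct, but it takes a genuinely different route from the paper's. The paper argues dynamically: it writes the graph $H$ as the result of $s$ $\Delta$-YY and $r$ YY-$\Delta$ transformations applied to $G$, observes (via the excluded-subgraph machinery) that every $\Delta$-YY destroys exactly three connections of size $1$ and every YY-$\Delta$ creates exactly three, and combines this with $s-r=n-i$ to get $3i-3(s-r)=6i-3n$ directly. You instead argue by a static conservation count: the total number of connections is $3i$ by Proposition~\ref{prop m edges}, the total edge count with multiplicity is $3n$ by $6$-regularity, and once connections of size $\geq 3$ are ruled out the two linear equations in $c_1,c_2$ force $c_1=6i-3n$. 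The bound on connection sizes that you prove by induction is exactly Lemma~\ref{lem no big connections}, which the paper states and proves separately a little later; your inductive argument for it matches the paper's in substance (both cases reduce to producing excluded subgraph~1 and invoking Lemma~\ref{lem excluded} together with Corollary~\ref{cor simple finite}). What your route buys is that it is independent of the particular transformation sequence and yields the count $c_2=3n-3i$ of size-$2$ connections as a free byproduct; what the paper's route buys is that it needs neither Proposition~\ref{prop m edges} as a black box nor the size bound, only the local effect of each single transformation. Both are legitimate, and your reliance on the later lemma is harmless since its proof does not depend on this proposition.
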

\begin{proof}
Consider any graph in the equivalence class of $G$, it can be obtained from $G$ by a series of $s$ $\Delta$-YY transformations and $r$ YY-$\Delta$ transformations. Further, since the equivalence class is finite, every delta must consist only of connections of size exactly 1, and no wye can have edges between its outer vertices, otherwise the graph would contain an excluded subgraph and thus be in an infinite equivalent class. Thus, in every $\Delta$-YY transformation, we lose exactly three connections of size 1 and gain exactly three connections of size 2. Similarly, in every YY-$\Delta$ transformation, we lose three connections of size 2 and gain three connections of size 1. To increase the number of vertices in a graph by $k$, we must perform $k$ more $\Delta$-YY transformations than YY-delta transformations in our series of transformations. So starting from $G$, which is a minimal graph on $i$ vertices and going to a graph on $n$ vertices, $s-r=k=n-i$. Furthermore, $G$ has $\frac{6i}{2}=3i$ connections of size 1. We then perform $s$ $\Delta$-YY transformations where we lose 3 connections of size 1 and $r$ YY-$\Delta$ transformations where we gain 3 connections of size 1. So the number of connections of size 1 is equal to $3i-3s+3r=3i-3(s-r)=3i-3(n-i)=3i-3n+3i=6i-3n$. Therefore, any graph on $n$ vertices in the equivalence class of $G$ has exactly $6i-3n$ connections of size exactly 1.
\end{proof}

\begin{prop}
Let $G$ be a connected simple 6-regular graph with $i$ vertices and cyclomatic number $c$. Then the simplified version $G^S$ of any graph on $n$ vertices in the $\Delta$-YY equivalence class of $G$ has cyclomatic number $c+i-n$.
\end{prop}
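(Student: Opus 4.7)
The plan is to combine Proposition~\ref{prop m edges} with Euler's formula, essentially the same bookkeeping that underlies Proposition~\ref{prop cyclomatic simple}. First I would observe that $G$ is simple, so $G = G^S$, and Euler's formula applied to the connected graph $G$ gives $c = m - i + 1$, where $m$ denotes the number of edges of $G$; equivalently $m = c + i - 1$.

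Next, for any graph $H$ on $n$ vertices in the $\Delta$-YY equivalence class of $G$, Proposition~\ref{prop m edges} tells us that $H$ has exactly $m$ connections. Since the edges of $H^S$ are in bijection with the connections of $H$, the simple graph $H^S$ has exactly $m$ edges and $n$ vertices. Moreover $H^S$ is connected: by Proposition~\ref{prop connectivity}, $\Delta$-YY operations preserve connectivity, so $H$ is connected, and simplifying does not change the connected components.

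Applying Euler's formula once more, the cyclomatic number of $H^S$ equals $m - n + 1 = (c + i - 1) - n + 1 = c + i - n$, as required. There is no real obstacle in this argument; the substantive work has already been done in Propositions~\ref{prop m edges} and \ref{prop cyclomatic simple}, and the only point worth highlighting is the combinatorial fact that in the simple-graph setting the number of connections is a $\Delta$-YY invariant, so that all the edge count used in Euler's formula for $H^S$ can be traced back through the equivalence class to the edge count of $G$.
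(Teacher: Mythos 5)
Your proposal is correct and follows essentially the same route as the paper: invoke Proposition~\ref{prop m edges} to see that the number of connections is constant across the equivalence class, identify the edges of the simplified graph with the connections, and apply Euler's formula twice. Your explicit remark that connectivity is preserved (so Euler's formula applies to $H^S$) is a small point the paper leaves implicit, but the argument is the same.
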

\begin{proof}
We know that for any graph $G'$ in the equivalence class of $G$, its cyclomatic number is $c'=m-n+1$ where $m$ is the number of connections in $G'$ and $n$ is the number of vertices. By Proposition~\ref{prop m edges}, the number of connections is the same for all graphs in the same equivalence class. We know that the cycle space of $G$ has dimension $c=m-i+1 $, so $m=c+i-1$ for all graphs in its equivalence class. Thus, we find that $c'=c+i-1-n+1=c+i-n$. Therefore, any graph on $n$ vertices in the equivalence class of $G$ has cyclomatic number $c+i-n$.
\end{proof}

Some of these simple graph equivalence classes also include double-edged graphs, and when they do exist, these graphs also have a special role in the equivalence class. 

\begin{theorem}\label{thm 415}
Let $G$ be a 6-regular simple graph. A double-edged graph exists in the $\Delta$-YY equivalence class of $G$ if there exists a set of edge disjoint triangles in $G$ whose size is equal to the number of vertices in $G$.  
\end{theorem}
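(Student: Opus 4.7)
The plan is to perform a $\Delta$-YY transformation on each of the $n$ edge-disjoint triangles and verify that the resulting graph is double-edged.

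First I would use an edge count to pin down the structure of the hypothesis. Since $G$ is a simple $6$-regular graph on $n$ vertices, $|E(G)|=3n$, so a collection of $n$ edge-disjoint triangles uses exactly all of $E(G)$ and hence partitions the edge set. Consequently every vertex $v$ lies in exactly three of these triangles: each of the six edges at $v$ belongs to a unique triangle of the partition, and each such triangle accounts for exactly two of the edges at $v$.

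Next, I would perform $\Delta$-YY transformations on the triangles one at a time, in any order $T_1, T_2, \ldots, T_n$. The important point is that edge-disjointness makes these operations non-interfering: performing a $\Delta$-YY on $T_i$ only deletes the three edges of $T_i$ and introduces a new vertex $x_i$ adjacent by a double edge to each vertex of $T_i$. The edges of all other $T_j$ with $j \neq i$ are untouched, so these triangles remain intact in the intermediate graph and can be transformed at the next step. After $n$ transformations we obtain a graph $G'$ in the $\Delta$-YY equivalence class of $G$, whose vertex set consists of the original $n$ vertices together with the new vertices $x_1, \ldots, x_n$, for $2n$ vertices in total.

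Finally, I would verify that $G'$ is double-edged. Every edge of $G$ has been deleted, since every edge of $G$ belonged to exactly one $T_i$, so all edges of $G'$ are among the double edges introduced by the $n$ transformations. Each $x_i$ contributes exactly three double edges, one to each vertex of $T_i$. Each original vertex $v$ then sits on three double edges, one to each $x_i$ with $v \in T_i$, because $v$ lies in exactly three of the $T_i$. Hence every connection of $G'$ has size exactly $2$, which is the definition of a double-edged graph.

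The argument is largely bookkeeping, so I do not expect any serious obstacle. The one point that requires care is the non-interference claim: one must check that each $\Delta$-YY only adds edges incident to its new vertex and only deletes the three triangle edges, so that the remaining triangles of the partition survive as honest deltas in the intermediate graph and no unexpected connections among the $x_i$ (or extra parallel edges between the $x_i$ and the original vertices) arise.
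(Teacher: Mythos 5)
Your proposal is correct and follows essentially the same route as the paper's proof: count edges to see that the $n$ edge-disjoint triangles use all $3n$ edges of $G$, apply a $\Delta$-YY transformation to each triangle (edge-disjointness guaranteeing they do not interfere), and observe that every original connection of size 1 is deleted while only connections of size 2 are created. Your additional bookkeeping (each vertex lying in exactly three triangles, the explicit structure of the resulting graph on $2n$ vertices) is a slightly more detailed verification of the same argument.
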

\begin{proof}
Let $G$ be a 6-regular simple graph on $n$ vertices and assume that $G$ has a set $S$ of $n$ edge-disjoint triangles. By the Handshaking Lemma, $G$ has $\frac{6n}{2}=3n$ edges. When we perform a $\Delta$-YY transformation on a triangle, we remove three connections of size 1 and replace them with three connections of size 2. Furthermore, this transformation does not affect any other triangles in $S$, each edge-disjoint triangle is independent. Each triangle has three edges, the triangles are edge-disjoint, we have $n$ of these triangles, and we can perform $\Delta$-YY transformations on all of them. So all $3n$ edges go through $\Delta$-YY transformations, all connections of size 1 are removed, and only connections of size 2 are added. Hence, the resulting graph has only connections of size 2. Thus, we have found a double-edged graph in the equivalence class of $G$. 
\end{proof}

We notice that Theorem \ref{thm 415} is only a one way result. For any readers wanting a counterexample for the converse, one is provided in Figure \ref{fig:415counter}.

\begin{figure}
    \centering
    \includegraphics[width=3cm]{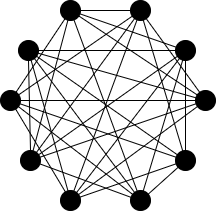}
    \caption{Counterexample for converse of Theorem \ref{thm 415}}
    \label{fig:415counter}
\end{figure}

\begin{lemma}\label{lem no big connections}
In the $\Delta$-YY equivalence class of a simple 6-regular graph, no graph in the class has a connection of size greater than 2. 
\end{lemma}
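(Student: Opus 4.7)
The plan is to suppose for contradiction that some graph $H$ in the $\Delta$-YY equivalence class of the simple 6-regular graph $G$ contains a connection of size at least $3$, and to track backwards along a sequence of $\Delta$-YY operations from $G$ to $H$ to produce an excluded subgraph at some intermediate stage. Since $G$ is simple, it has no multi-edges and hence contains none of the four excluded subgraphs from Section~\ref{sec add ons} (each of them requires at least one connection of size $\geq 2$). By Lemma~\ref{lem excluded} this means no graph in the equivalence class of $G$ can contain an excluded subgraph, so producing one anywhere along the transformation sequence will yield a contradiction.

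First I would locate the earliest step in the sequence at which a connection of size $\geq 3$ appears, and let $H''$ and $H'$ denote the graphs just before and just after that step. By choice of this step, every connection of $H''$ has size at most $2$. Next I would rule out the possibility that this step is a $\Delta$-YY transformation: a $\Delta$-YY introduces a fresh vertex $x$ whose only incident edges are the three newly added double edges $\{ux,ux,vx,vx,wx,wx\}$, so the only newly created connections are the three size-$2$ connections between $x$ and $u,v,w$, and no previously existing connection is enlarged. Hence the step must be a YY-$\Delta$ transformation, with external vertices $u,v,w$, centre $x$, and newly added single edges $uv$, $uw$, $vw$. In order for $H'$ to gain a connection of size $\geq 3$, one pair of external vertices, say $u$ and $v$, must already have carried a connection of size exactly $2$ in $H''$.

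The punchline is that $H''$ then contains the three vertices $\{u,v,x\}$ as a $3$-cycle in which each of the connections $ux$, $vx$, and $uv$ has size $2$; this is an instance of subgraph~$1$, which contradicts the assertion that no graph equivalent to $G$ contains an excluded subgraph. The only step requiring care is the case analysis showing which operation can first create a size-$\geq 3$ connection, and in particular the observation that the freshly added vertex in a $\Delta$-YY guarantees that all new connections it creates have size exactly $2$, which is what forces the responsible operation to be a YY-$\Delta$ applied on top of a preexisting double edge.
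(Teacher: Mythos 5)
Your proposal is correct and follows essentially the same route as the paper's own proof: trace the first appearance of a size-$\geq 3$ connection, observe that a $\Delta$-YY only creates size-2 connections to a fresh vertex, so the culprit must be a YY-$\Delta$ whose external vertices already carried a double edge, which together with the wye yields excluded subgraph~1 and hence a contradiction. Your closing of the contradiction via Lemma~\ref{lem excluded} applied to the simple graph $G$ is a slightly more direct justification than the paper's appeal to finiteness of the class, but the argument is the same.
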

\begin{proof}
 Assume for contradiction that there exists a connection of size at least 3 between two vertices. Consider where this edge came from. It cannot be from a $\Delta$-YY transformation as that only adds connections of size 2, thus it must come from a YY-$\Delta$ transformation. A YY-$\Delta$ transformation adds one edge between each pair of its external vertices. For this to create a connection of size greater than 2, there must already be a connection of size at least 2 between two external vertices of a wye, which gives us an excluded subgraph and contradicts the fact that the equivalence class of a simple graph is finite. Therefore, there cannot exist a graph in the same equivalence class as a simple graph with a connection of size greater than 2. 
\end{proof}

\begin{theorem}
If it exists in the $\Delta$-YY equivalence class of a simple 6-regular graph, a double-edged graph is a maximal graph in the $\Delta$-YY equivalence class. 
\end{theorem}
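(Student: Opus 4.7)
The plan is to combine the preceding two counting propositions with Lemma~\ref{lem no big connections} to obtain a sharp upper bound on the number of vertices realized exactly by double-edged graphs. Let $G$ be a simple 6-regular graph with $i$ vertices whose equivalence class contains a double-edged graph $D$, and let $n_D$ denote the number of vertices of $D$.

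First I would invoke Lemma~\ref{lem no big connections} to note that every connection of every graph in the equivalence class of $G$ has size exactly $1$ or $2$. Then I would apply the proposition that counts size-$1$ connections: any graph on $n$ vertices in the equivalence class of $G$ has exactly $6i - 3n$ connections of size $1$. Since this is a count of a set of connections, it must be non-negative, so every graph in the class satisfies $n \le 2i$.

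Since $D$ is double-edged, it has zero connections of size $1$, and thus $6i - 3n_D = 0$, forcing $n_D = 2i$. Hence $D$ attains the upper bound on the number of vertices, so $D$ is a maximal graph in its equivalence class, as desired.

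There is no real obstacle: the argument is a one-line consequence of the prior counting results. The only point worth checking is the interpretation of \emph{maximal}: by analogy with the paper's definition of \emph{minimal} (fewest vertices in the class), \emph{maximal} means most vertices, which is the measure our bound controls. This interpretation is natural here since Proposition~\ref{prop m edges} already ensures that the number of connections is constant throughout the class, so vertex count is the meaningful notion of size.
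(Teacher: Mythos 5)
Your argument is correct, and it reaches the conclusion by a slightly different route than the paper. The paper argues by contradiction: it computes that a double-edged graph on $n$ vertices has $\tfrac{3n}{2}$ connections via the handshaking lemma, then notes that a hypothetical graph on $n+1$ vertices would need at least $\tfrac{3(n+1)}{2}$ connections (since by Lemma~\ref{lem no big connections} every connection has size at most $2$), contradicting the invariance of the connection count from Proposition~\ref{prop m edges}. You instead route everything through the formula for the number of size-$1$ connections, $6i-3n$, whose non-negativity gives the absolute bound $n\le 2i$ and whose vanishing characterizes the double-edged graphs as exactly those attaining $n=2i$. This is arguably cleaner and slightly stronger (it identifies the maximum vertex count explicitly rather than merely excluding $n+1$), and as a bonus your citation of Lemma~\ref{lem no big connections} is not actually needed for your version, since the count $6i-3n$ is non-negative regardless. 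Two small caveats: the proposition you invoke is stated only for \emph{connected} simple $6$-regular graphs, so strictly you should first reduce to connected components (as the paper does in its minimality theorem; connectivity is preserved by the operations, so this is immediate), whereas the paper's own proof uses Proposition~\ref{prop m edges}, which carries no connectivity hypothesis. Also, your reading of ``maximal'' as ``most vertices'' matches the paper's convention, dual to its definition of minimal graphs.
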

\begin{proof}
Let $G$ be a double-edged graph on $n$ vertices in the equivalence class of a simple graph. Since $G$ is double-edged, $G^S$ is 3-regular, so we can use the handshaking lemma to find the number of connections, $m=\frac{3n}{2}$. Now assume for contradiction that there exists a graph $G'$ on $n+1$ vertices in the equivalence class. The minimum number of connections that this graph could have is $\frac{3(n+1)}{2}$, in the case where every connection is of size 2. We cannot have fewer connections because there are no connections of greater size, by Lemma~\ref{lem no big connections}. However, by Proposition~\ref{prop m edges}, $G'$ must have $m=\frac{3n}{2}$ connections. So we find that $\frac{3(n+1)}{2}\leq m = \frac{3n}{2}$, which provides a contradiction. Therefore, the double-edged graph is a maximal graph in the equivalence class. 
\end{proof}

Given these special minimal and maximal graphs, one might hope that they are unique, so that these equivalence classes form lattices, however this is not the case. The maximal double-edged graphs are not always unique, nor are the maximal graphs when double-edged graphs do not exist. In Figure \ref{fig:110} we can see a set of maximal double-edged graphs in the equivalence class of the given simple graph, showing that these are not always unique. In Figure \ref{fig:112} we can see a set of maximal graphs of the equivalence class of the given simple graph, where no double-edged graphs exist in the class.  Whether or not the simple graphs are unique minimal graphs is still an open question. We looked at all simple 6 regular graphs on 9 and 10 vertices and found that they were unique minimal graphs in their equivalence classes using a brute force method of finding equivalent graphs which starts at the simple graph and computes all possible sequences of 20 $\Delta$-YY operations. It is easy to see that any minimal graph of an equivalence class containing a simple graph must also be a simple graph, so this question is equivalent to asking if it is possible to find two equivalent simple graphs.

\begin{figure}
    \centering
    \includegraphics[width=8cm]{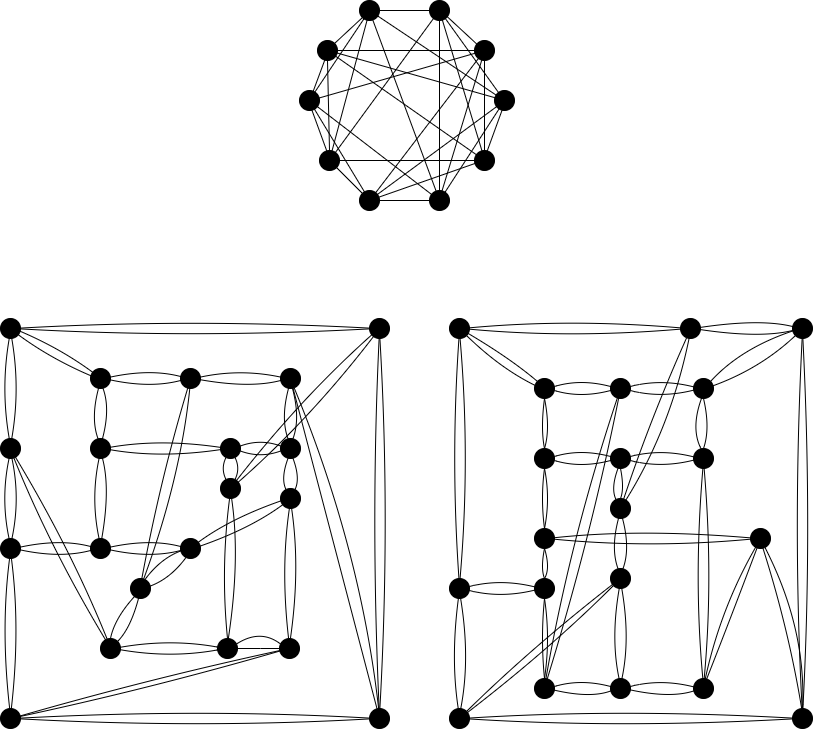}
    \caption{A simple graph and its non-unique double-edged maximal graphs}
    \label{fig:110}
\end{figure}
\begin{figure}
    \centering
    \includegraphics[width=16cm]{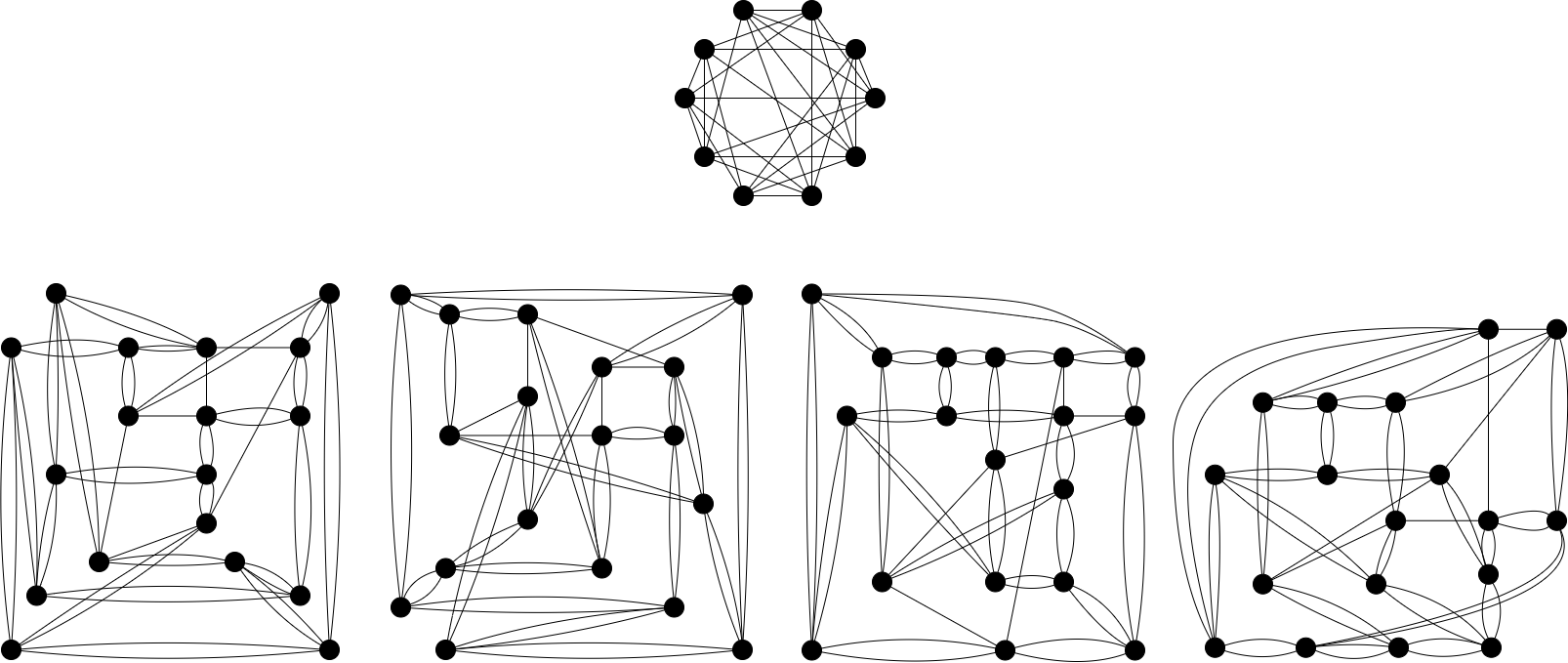}
    \caption{A simple graph and its maximal graphs}
    \label{fig:112}
\end{figure}

\section{Computational approach, observations, and discussion}\label{sec discussion}

Computations to find the data used in this paper were done by hand and using SageMath. A brute force algorithm was used to find the equivalence classes of doubled 3-regular graphs and simple graphs. Finite equivalence classes were found in full, when known to exist, while infinite equivalence classes were found by stopping the algorithm once it generated 500 to 5000 graphs. These sizes allowed us to find what should be the minimum graphs, which were used to fill out Table~\ref{tab:table 1}. 

The periods of the graphs were found by Borinsky and Schnetz \cite{BSphi3}, and we thank them for making the pre-publication results of their computations available to us. 

Let us reiterate the computational observations made in Section~\ref{sec doubled 3 reg}.  Table~\ref{tab:table 1} shows how the size of the minimal graph (measured by number of vertices or by number of connections) in the equivalence class of a doubled $\phi^3$ graph relates to the first appearance of numbers in the period of the $\phi^3$ graph as calculated by Borinsky and Schnetz \cite{BSphi3}.  This table shows that a graph having a small (by one of the above measures) member of its equivalence class is one explanation for a low weight period appearing in a higher loop order graph.  

We also investigated particular equivalence classes computationally including the Mobius-Kantor graph which has a finite equivalence class with a unique simple graph as the minimal element, and the family of of the triple triangle which we were not able to get a conclusive handle on and so must remain for future investigation.

In view of the table it is interesting to understand which doubled $\phi^3$ graphs have small minimal graphs.  We showed in Proposition~\ref{prop bipartite small} that starting from a bipartite $\phi^3$ graph gives a minimal graph with at most half the number of vertices, and we conjecture in Conjecture~\ref{conj not bipartite} that all other graphs have minimal graphs with more vertices.

Since being able to find a minimum graph of a given graph can give us information on the period of said graph, it is also useful to have an algorithm to generate minimal graphs. Unfortunately, the brute force algorithm takes a long time to run, and is not conducive to efficiently finding such minimum graphs. To that end, we believe that it would be best to use knowledge gathered to optimize this algorithm. At the moment, we do not have a proven way to do this, though we have collected a list of subgraphs that we know would allow vertices and connections to be minimized. We believe that combining a greedy strategy with these subgraphs may provide a quicker method of finding minimum graphs. For any reader interested in trying their hand at improving this algorithm, we offer the following information.

The subgraphs in Figure \ref{fig:minvert} allow us to minimize the number of vertices of a graph, this is an infinite set, though each size of subgraph has a finite number of elements. Thus, for any graph of size $n$, it can only have subgraphs of size approximately $\leq n$, giving a finite number of subgraphs for any finite graph. It is important to note that while edges must remain distinct for these subgraphs to minimize vertices, the vertices do not, so there are cases where a graph drawn with $n$ vertices could have several of these vertices be the same vertex, decreasing the number of vertices found in the subgraph. This is why the subgraphs are ordered by the number of triangles they contain, rather than their number of vertices.

\begin{figure}[h]
    \centering
    \includegraphics[width=10cm]{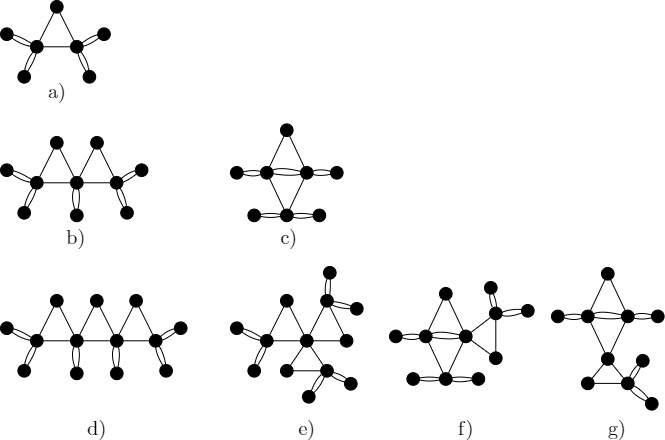}
    \caption{Subgraphs to minimize the number of vertices in a graph}
    \label{fig:minvert}
\end{figure}

While there are many of these subgraphs, we conjecture that only the graphs in the first column, and those with more triangles that follow this pattern, are needed to find a minimal graph of an equivalence class of a doubled 3-regular graph. 

\begin{conjecture}
Given a doubled 3-regular graph $G$, perform the YY-$\Delta$ transformation on all wyes in all possible ways to find a graph $G'$ of smallest size. Then $G'$ is a minimal graph iff it does not contain 
    a 
    chain of one or more 3-cycles, each connected by a vertex, which is also doubly adjacent to one more vertex, where the 3-cycles on either end of the chain have one vertex with exactly 4 neighbours and no connections of size greater than 2. 
\end{conjecture}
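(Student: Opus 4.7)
Since $G'$ is obtained by exhaustively applying YY-$\Delta$ to $G$, the graph $G'$ contains no wye. Any sequence of $\Delta$-YY and YY-$\Delta$ transformations leading out of $G'$ to a smaller graph must therefore begin with a $\Delta$-YY and, for a net reduction in vertex count, must use at least one more YY-$\Delta$ than $\Delta$-YY. This reframes the conjecture as: $G'$ admits no net-reducing sequence if and only if $G'$ has no chain of triangles of the stated type.

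For the reverse direction (chain implies non-minimality), the plan is to give an explicit reduction by induction on the chain length $k$. In the base case $k=1$ the lone triangle $T$ has a vertex $e$ with exactly four neighbours, namely the other two triangle vertices via single edges and two outside vertices via double edges. Performing $\Delta$-YY on $T$ introduces a new vertex $x$ doubly adjacent to the three triangle vertices, so $e$ acquires three double connections (to $x$ and to its two outside neighbours) and is now a wye. A YY-$\Delta$ at $e$ is then legal, and a direct check shows that the second "special" end vertex of $T$ is likewise turned into a wye in the process, giving a second YY-$\Delta$ and a net decrease of at least one in the vertex count. The inductive step peels off an end triangle using the base-case reduction, leaving a chain of length $k-1$ with the required end condition now inherited at the new end.

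For the forward direction (no chain implies minimality), I would argue contrapositively: assuming $G'$ admits a net-reducing sequence, extract the chain structure. Take a shortest such sequence and focus on the first $\Delta$-YY operation, acting on some triangle $T_1 \subseteq G'$. The new wye created at $x_1$ is "balanced" by the $\Delta$-YY that created it, so the next YY-$\Delta$ that eventually realises a net gain must annihilate a wye formed on some \emph{original} vertex $v$ of $T_1$. For $v$ to become a wye it must have had two double edges outside $T_1$, forcing $v$ to have exactly four neighbours with the two remaining being its partners in $T_1$ via single edges --- precisely the end-vertex condition. Iterating the analysis on the newly formed structure (which after the first YY-$\Delta$ contains another triangle on $x_1$ together with the former external neighbours of $v$) either terminates, giving the single-triangle base case, or propagates to a neighbouring triangle $T_2$ sharing a vertex and building up the full chain. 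Throughout, the finiteness of the equivalence class (Theorem~\ref{thm finite}) forbids the excluded subgraphs and thereby constrains which local configurations can arise in $G'$.

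The main obstacle is the forward direction: $\Delta$-YY and YY-$\Delta$ can interleave in complicated ways, so "following the wye" through a long reducing sequence requires ruling out side configurations that might supply a reduction without forming a chain. The cleanest approach is probably to establish a local potential function, perhaps weighted by the number of single-edged triangles against the number of double connections inside a ball of small radius, which strictly decreases only along chain reductions and is bounded below by the vertex count. Such a potential would convert the global combinatorial analysis into a local one and would also explain the specific end condition of "exactly four neighbours". Verifying that this potential has the required behaviour under every admissible operation is where most of the technical work would lie.
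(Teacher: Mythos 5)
The statement you are trying to prove is stated in the paper as a \emph{conjecture}; the authors offer no proof, only computational evidence up to loop order~6, so there is no argument of theirs to compare yours against. Your proposal also does not close the conjecture. The direction you do essentially establish is the easy one: a chain of the stated type implies non-minimality. Your explicit reduction is sound --- performing $\Delta$-YY on an end triangle turns its degree-4 vertex $e$ into a wye centre (it becomes doubly adjacent to the new vertex $x$ and to its two outside neighbours), the YY-$\Delta$ at $e$ is legal, and after it the shared vertex of the next triangle inherits the ``exactly four neighbours'' end condition, so the induction on chain length goes through with a net loss of one vertex overall. (One point you gloss over: for a chain of length one your argument needs \emph{two} special vertices on the single triangle, one for each coinciding ``end''; this is consistent with Figure~\ref{fig:minvert}(a) but is an implicit reading of the conjecture's wording that you should make explicit.) This direction is in any case already asserted in the paper's surrounding discussion of Figure~\ref{fig:minvert}; the content of the conjecture is the converse.

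The genuine gap is the forward direction: no chain implies $G'$ is minimal. You acknowledge this yourself, but the sketch you give has a specific unsupported step --- the claim that in a shortest net-reducing sequence the YY-$\Delta$ realising the net gain ``must annihilate a wye formed on some original vertex $v$ of $T_1$.'' Nothing forces this: a reducing sequence may perform several $\Delta$-YY transformations on triangles far apart before any YY-$\Delta$, the wyes it eventually collapses may sit on vertices created earlier in the sequence rather than on vertices of $G'$, and the equivalence class of a doubled $3$-regular graph of girth less than $6$ is infinite, so the sequence may grow the graph arbitrarily before shrinking it. Minimality is a statement about the entire (possibly infinite) equivalence class, and ``following the wye'' locally does not obviously control such sequences. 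The potential-function idea is the right kind of tool, but you have not exhibited a candidate potential nor verified monotonicity under all admissible operations, and that verification is precisely where the difficulty of the conjecture lies. As it stands, your proposal is a plausible plan of attack for an open problem, not a proof.
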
    
    The subgraph with a length 1 chain is shown in Figure\ref{fig:minvert}a, while the next two smallest examples 
    are shown as Figures \ref{fig:minvert}b and \ref{fig:minvert}d.
The other graphs in Figure~\ref{fig:minvert} also have no YY-$\Delta$ transformations yet can be reduced to fewer vertices using a $\Delta$-YY transformation first followed by other $\Delta$-YY operations, so for arbitrary graphs, the appearance of such subgraphs also implies non-minimality.  The content of the conjecture is that for the class of a doubled 3-regular graphs, the more restricted type of subgraph described in the conjecture and illustrated in the first column of Figure~\ref{fig:minvert} is all that needs to be checked for minimality.

This conjecture is based on observations that show that all minimal graphs in equivalence classes of doubled 3-regular graphs up to loop order 6 can be found using this strategy. 

The subgraphs in Figure \ref{fig:min2} allow us to minimize the number of connections in a graph, without changing the number of vertices. This is not an extensive list and should not be taken as such, it is simply a starting point for any individual who is interested in this problem. Similarly to the subgraphs in Figure \ref{fig:minvert}, the edges of these graphs must be distinct, but the vertices are not necessarily distinct. 

\begin{figure}
    \centering
    \includegraphics[width=10cm]{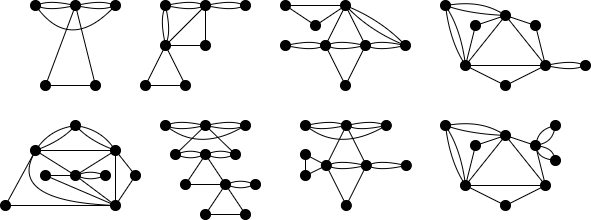}
    \caption{Subgraphs to minimize the number of connections in a graph}
    \label{fig:min2}
\end{figure}

Another interesting question is when minimal graphs are unique.  As commented at the end of Section~\ref{sec simple}, it is an open question even if simple graphs are unique minimal graphs.

\medskip

The $\Delta$-YY operation was developed in order to better understand the Feynman period of $\phi^3$ graphs so some of the most interesting questions about the operation relate to the nature of the equivalence classes of doubled 3-regular graphs -- when do they contain more than one doubled 3-regular graph, how can we understand their minimal elements, when do they contain a simple graph -- and their relation to the Feynman periods -- do the patterns in Table~\ref{tab:table 1} continue, can we predict more properties of the period from characteristics of the minimal elements or other characteristics of the equivalence classses?  In particular, is Conjecture~\ref{conj not bipartite} true?  
Relatedly, as suggested to us by Oliver Schnetz, how do $\Delta$-YY equivalence classes interact with the notion of constructibility in the theory of graphical functions, see \cite{borinsky2021graphical}.

\medskip

From a pure graph theory perspective, we have introduced a degree-preserving variant of the $\Delta$-Y operations.  This is a natural operation to play with in any context where it is desirable to preserve regularity.  Minors are no longer the correct notion for understanding reducibility, since minors are not degree preserving, but we can still ask about reducibility to a fixed graph under the $\Delta$-YY operations, or under the $\Delta$-YY operations along with some other degree preserving operations.  The family of the triple triangle seems the most promising place to start in this direction; it would be nice to understand its structure.   One could define $\Delta$-Y${}^n$ operations for $n>2$, however these operations do not preserve degree and so do not seem either as interesting or as useful.

\bibliographystyle{plain}
\bibliography{bib}

\end{document}